\providecommand{\dfrac}{\displaystyle \frac}
\providecommand{\N}{\mathbb{N}}
\providecommand{\Z}{\mathbb{Z}}
\providecommand{\bars}{\overline}
\providecommand{\Syl}{\textnormal{Syl}}
\providecommand{\Aut}{\textnormal{Aut}}
\providecommand{\Irr}{\textnormal{Irr}}
\providecommand{\Inn}{\textnormal{Inn}}
\providecommand{\cd}{\textnormal{cd}}
\newtheorem{introthm}{Theorem}
\newtheorem{thm}{Theorem}[section]
\newtheorem{cor}[thm]{Corollary}
\newtheorem{lem}[thm]{Lemma}
\newtheorem*{fct}{Fact}
\title{Groups Having a Character of Maximal Degree}
\author{Sara Jensen\footnote{sjensen1@carthage.edu; Department of Mathematics; Carthage College; 2001 Alford Park Dr; Kenosha, WI 53140; USA.} \and Mark L. Lewis\footnote{lewis@math.kent.edu; Department of Mathematical Sciences; P.O. Box 5190; 1300 Lefton Esplanade; Kent, OH 44242; USA.}}
\date{\today}
\begin{document}
\maketitle
\begin{abstract}
Let $G$ be a group, let $d$ be a character degree, and let $e$ be the integer so that $|G| = d(d+e)$.  It has been shown when $e > 1$ that $|G| \le e^4 - e^3$.  In this paper, we consider the groups where $|G| = e^4 - e^3$.  It is known that $e$ must be a power of a prime.  We classify the groups where $e$ is a prime and where $e$ is $4$, $9$, and $25$.  In so doing, we find a new nonsolvable Camina pair.
\end{abstract}

\textbf{2020 Mathematics Subject Classification: 20C15} 

\section{Introduction}

Throughout this paper, all groups are finite.  Let $G$ be a group and let $d$ be the degree of an irreducible character of $G$.  Usually, we take $d$ to be the degree of the largest irreducible character of $G$.  We know that $|G| > d^2$, so there is a positive integer $e$ so that $|G| = d(d+e)$.  

A group $G$ satisfies either $|G| = 2$ or is a $2$-transitive Frobenius group if and only if $e = 1$ (see Theorem 7 of \cite{berk}).  Since we can obtain a two transitive Frobenius group by taking the full multiplicative group of a finite field act on its additive group, there is no bound on $|G|$ when $G$ is a two-transitive Frobenius group, and thus, we cannot bound $|G|$ when $e = 1$.  

In \cite{snyder}, Snyder proves when $e > 1$ that $|G|$ is bounded by a function of $e$.  In particular, Snyder shows that $|G| \le (2(e!))^2$.  Isaacs greatly improves this bound in \cite{issnyd} by showing there is a universal constant $B$ so that $|G| \le B e^6$ and gives examples of groups $G$ where $|G| = e^4 - e^3$.  Durfee and the first author show in \cite{durjen} that in fact $|G| \le e^6 + e^4$ when $e$ is a prime power and $|G| < e^4 + e^3$ when $e$ is divisible by two distinct primes.  In addition, when $G$ has a nontrivial normal abelian subgroup and $e$ is divisible by two distinct primes, then $|G| < e^4 - e^3$.  The second author shows in \cite{lewissnyder} that when $G$ has a nontrivial abelian normal subgroup that $|G| \le e^4 - e^3$ and in general, that $|G| < e^4 + e^3$ and with Hung and Schaeffer Fry in \cite{hung2016finite} show that when $G$ has a nonabelian minimal normal subgroup, then $|G| < e^4 - e^3$, and so, for all values of $e > 1$ it is the case that $|G| \le e^4 - e^3$.   Furthermore, in \cite{hung2016finite}, they ask about the structure of the groups $G$ where $|G| = e^4 - e^3$.

For the remainder of this paper, we will assume that $e > 1$.  We will say that $G$ is an {\it Isaacs group} of degree $e$ if $|G| = e^4 - e^3$ where $e$ is an integer so that $|G| = d (d+e)$ and $d$ is a character degree of $G$ with $d = e^2 - e$.  

Our goal is to begin to understand the structure of Isaacs groups.  As mentioned above, in Theorem B (b) \cite{durjen}, Durfee and the first author prove that if $e$ is divisible by two primes and $G$ has a nontrivial abelian normal subgroup, then $|G| < e^4 - e^3$.  On the other hand, Hung, the second author, and Schaeffer-Fry show in Theorem 1.2 of \cite{hung2016finite} that if $G$ has a nonabelian minimal normal subgroup, then $|G| < e^4 - e^3$.  It follows that all minimal normal subgroups of an Isaacs group must be abelian and $e$ is a power of a prime.   

In \cite{hung2016finite}, it is shown an Isaacs group of degree $e  = p^a$ where $p$ is a prime and integer $a$ is one of the groups Gagola studied in \cite{gagola}.  We will summarize the known results regarding Isaacs groups and the groups Gagola studied in Section \ref{background}.  For our purposes here, it is enough to know that such a group will have a unique minimal normal subgroup and that unique minimal normal subgroup will have order $e$ where $e$ is the degree for $G$ as an Isaacs group and $N$ is an elementary abelian $p$-group where $p$ is the prime dividing $e$.  In our first result, we able to determine information about $O_p (G)$.

\begin{introthm}\label{intro 1}
Let $p$ be a prime, let $a \ge 1$ be an integer, let $G$ be an Isaacs group of order $p^a$, let $N$ be the unique minimal normal subgroup of $G$, and let $K = O_p (G)$.  Then $p^a < |K:N| \le p^{2a}$, $N \le Z (K)$, $K$ is not abelian, and $|K:K'| \ge p^a$.
\end{introthm}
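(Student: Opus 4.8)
The plan is to pin down the local structure around $N$ by Clifford theory, read off the three easy assertions, and then isolate the lower bound $|K:N| > p^a$ as the single genuinely hard point.

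First I would use the Gagola structure recalled in Section \ref{background}: the faithful character $\chi$ of degree $d = e^2-e = p^a(p^a-1)$ vanishes off $N$, so Clifford theory applied to $\chi_N$ forces every nonprincipal $\lambda \in \Irr(N)$ to occur, with $\chi_N = p^a(\rho_N - 1_N)$; in particular $G$ acts transitively on $\Irr(N)\setminus\{1_N\}$. Fixing such a $\lambda$ and setting $T = I_G(\lambda)$, transitivity gives $|G:T| = p^a-1$, hence $|T| = p^{3a}$ and $T$ is a Sylow $p$-subgroup of $G$. The Clifford correspondent $\psi \in \Irr(T\mid\lambda)$ satisfies $\psi^G = \chi$, so $\psi(1) = p^a$ and $\psi_N = p^a\lambda$, i.e. $\psi$ is fully ramified over $N$. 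Since $K = O_p(G)$ lies in every Sylow $p$-subgroup and is normal, $K \le \mathrm{core}_G(T) = \bigcap_{\mu\neq 1}I_G(\mu) = C_G(N)$, while $C_G(N)$ is itself a normal $p$-subgroup inside $T$; thus $K = O_p(G) = C_G(N) = \mathrm{core}_G(T)$. This yields $N \le Z(K)$ at once, and $K \le T$ gives the upper bound $|K:N| \le |T:N| = p^{2a}$.

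Next I would analyze $\bar T = T/\ker\lambda$. From $\psi_N = p^a\lambda$ one gets $\ker\lambda \le \ker\psi$, and a short order count (using $\theta(1)^2 \le |Q:Z(Q)|$ for a faithful irreducible $\theta$ together with the nontriviality of $Z(Q)$ for a $p$-group $Q$) rules out $|\ker\psi| > p^{a-1}$, so $\ker\psi = \ker\lambda$ and $|\bar T| = p^{2a+1}$ with $\bar\psi$ faithful of degree $p^a$. Since $\bar\psi(1)^2 = |\bar T : Z(\bar T)|$ and $N/\ker\lambda \le Z(\bar T)$ has order $p$, the group $\bar T$ is extraspecial with $\bar T' = Z(\bar T) = N/\ker\lambda$. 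Consequently $T' \le N$, and as $K \le T$ this forces $K' \le T' \le N$, a fact I will reuse for the derived-quotient estimate.

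The crux is the lower bound $|K:N| > p^a$, equivalently $|T:K| < p^a$. Here I would pass to $H = G/K = G/C_G(N) \hookrightarrow \Aut(N) = GL_a(p)$, a transitive linear group on the $\mathbb{F}_p$-space $\Irr(N) \cong \mathbb{F}_p^a$ (equivalently on $N$), whose stabilizer of the point $\lambda$ is $T/K$; because $|G:T| = p^a-1$ is prime to $p$, this stabilizer is a full Sylow $p$-subgroup of $H$, so $|T:K| = |H|_p$. I expect this to be the main obstacle, since the naive containment of $T/K$ in a Sylow $p$-subgroup of $GL_a(p)$ only bounds it by $p^{a(a-1)/2}$, which already exceeds $p^a$ when $a \ge 3$. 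The approach I would take is Hering's classification of transitive linear groups: the requirement that one-point stabilizers be $p$-groups eliminates the generic classical possibilities (for example $SL_n(q)$ with $n\ge 3$ or $Sp_{2m}(q)$ with $m \ge 2$ have non-$p$ stabilizers), leaving essentially the semilinear groups inside $\Gamma L_1(p^a)$ together with a short list of small exceptional and $SL_2(q)$-type cases, in each of which $|H|_p < p^a$ can be verified directly; alternatively this inequality may already be extractable from the structure theory of Gagola's groups summarized in Section \ref{background}.

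Finally the two remaining assertions fall out. The group $K$ is nonabelian, for if it were abelian then It\^o's theorem would give $\chi(1) = p^a(p^a-1) \mid |G:K|$, forcing $|K| \le p^{2a}$ and contradicting the lower bound. And since $K' \le N$, we get $|K:K'| \ge |K:N| > p^a$, which completes all four parts of the statement.
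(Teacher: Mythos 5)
Your handling of the first three assertions is fine and essentially matches the paper: the identification $K = C_G(N) = {\rm core}_G(T)$ gives $N \le Z(K)$ and $|K:N| \le p^{2a}$ (compare Lemma~\ref{cent of N}), and your It\^o argument for nonabelianness of $K$ is exactly the paper's Corollary~\ref{Op nonabel}. The genuine gap is your claim that $\bar T = T/\ker\lambda$ is extraspecial. From a faithful $\bar\psi \in \Irr(\bar T)$ with $\bar\psi(1)^2 = |\bar T : Z(\bar T)|$ and $|Z(\bar T)| = p$ you may conclude that $(\bar T, Z(\bar T))$ is a Camina pair (a group of central type), but \emph{not} that $\bar T' = Z(\bar T)$: nothing forces $\bar T$ to have class $2$, because the characters of $\bar T$ that contain $Z(\bar T)$ in their kernels are completely unconstrained by the fully ramified condition. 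The paper's own examples refute the claim: the $3$-closed Isaacs groups of degree $9$ built on ${\rm SmallGroup}(729,230)$ and ${\rm SmallGroup}(729,235)$ have $T = K$ of nilpotence class $3$ with $|K'| = 3^3 > 3^2 = |N|$, so $K' \not\le N$; more generally, Theorem~\ref{p^2 pclosed}(2) and Lemma~\ref{Zsigmondy}(3)(a) explicitly allow $N = Z(K) < K'$. Hence your derivation of the fourth assertion, $|K:K'| \ge |K:N| > p^a$, collapses --- indeed it would prove a strict inequality that fails in the class-$3$ alternative of Lemma~\ref{Zsigmondy}(3)(a), where $|K:K'| = p^a$ exactly. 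The paper obtains $p^a \mid |K:K'|$ by a different mechanism (Lemma~\ref{prime divisors}): a generalized Zsigmondy prime divisor $q$ of $p^a - 1$ supplies a Sylow $q$-subgroup $Q$ whose action on $K/\Phi(K)$, split by Fitting's theorem, is Frobenius on $[K,Z]/\Phi(K)$, and the defining minimality of $a$ forces every nontrivial irreducible constituent there to have order at least $p^a$.

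On the crux $|K:N| > p^a$ you correctly identify where the difficulty lies, but your route differs from the paper's and is left unverified: you propose to bound $|G/K|_p$ via Hering's classification of transitive linear groups whose point stabilizers are $p$-groups, whereas the paper (Lemma~\ref{index ptoa}) argues by contradiction inside $G/K$ --- if $|K:N| \le p^a$, then the Sylow $p$-subgroup of $G/K$ has order exceeding its index while every other Sylow subgroup of $G/K$ is cyclic or generalized quaternion (being a Frobenius complement on $N$), and Lemma~\ref{p prime index}, whose nonsolvable case rests on the Fleischmann--Lempken--Tiep classification of $p'$-semiregular groups \cite{fleisch}, then yields $O_p(G/K) > 1$, a contradiction. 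Either route requires a classification theorem, so your instinct is sound; but since your proof of $|K:K'| \ge p^a$ rests on the false extraspecial claim, the proposal as written is incomplete.
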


Next, we show that we have reasonable control of the structure of $G$ when $G$ is an Isaacs group of degree $p^a$ that is $p$-closed.  We will review the definition, history, and background of two transitive Frobenius groups in Section \ref{background}.

\begin{introthm}\label{intro 2}
Let $p$ be a prime and $a$ be a positive integer.  If $G$ is an Isaacs group of degree $p^a$ that is $p$-closed, then $G$ has a normal Sylow $p$-subgroup $P$ so that $|Z(P)| = p^a$ and $|P:Z (P)| = p^{2a}$ and a Hall $p$-complement $H$ of order $p^a-1$ so that $Z(P) H$ is a two transitive Frobenius group.
\end{introthm}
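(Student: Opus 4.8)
The plan is to pin down the normal Sylow $p$-subgroup $P$ completely using Clifford theory applied to the single irreducible character $\chi$ of degree $d = e^2 - e = p^a(p^a-1)$ whose existence is guaranteed by the definition of an Isaacs group, and then to read off the Frobenius action of $H$ on $Z(P)$ from the constituents of $\chi$ restricted to the minimal normal subgroup $N$. First I would record the arithmetic: since $|G| = e^4 - e^3 = p^{3a}(p^a - 1)$ and $p^a - 1$ is prime to $p$, the normal Sylow $p$-subgroup supplied by the $p$-closed hypothesis is $P = O_p(G) = K$ of order $p^{3a}$, and Schur--Zassenhaus gives a Hall $p'$-complement $H$ with $|H| = p^a - 1$. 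Theorem \ref{intro 1} then applies with $K = P$: the unique minimal normal subgroup $N$ (elementary abelian of order $p^a$) lies in $Z(P)$, and $|P : N| = p^{2a}$. It therefore suffices to prove (i) $|Z(P)| = p^a$, equivalently $Z(P) = N$, whence $|P:Z(P)| = p^{2a}$ is immediate, and (ii) that $H$ acts regularly on the nonidentity elements of $Z(P) = N$, which is exactly the assertion that $Z(P)H = N \rtimes H$ is a $2$-transitive Frobenius group. I would also note that $\chi$ is faithful: its kernel is normal, so if nontrivial it contains $N$, forcing $\chi \in \Irr(G/N)$ and $\chi(1)^2 \le |G:N| = p^{2a}(p^a-1)$, which fails since $\chi(1)^2 = p^{2a}(p^a-1)^2$.

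The crux is a Clifford-theoretic computation on $P$. Write $\chi_P = e(\theta_1 + \dots + \theta_t)$ with $\theta = \theta_1 \in \Irr(P)$, $t = |G : I_G(\theta)|$ a divisor of $p^a - 1$, and ramification $e$, so that $\chi(1) = e t\, \theta(1)$. Since $N \le Z(P) \le Z(\theta)$, the bound $\theta(1)^2 \le |P : Z(\theta)| \le |P:N| = p^{2a}$ gives $\theta(1) = p^b$ with $b \le a$, while the bound on ramification coming from irreducible projective representations of $I_G(\theta)/P$ gives $e^2 \le |I_G(\theta):P| = (p^a-1)/t$, that is $e^2 t \le p^a - 1$. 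Substituting $\theta(1) = p^b$ into $e t\, p^b = p^a(p^a - 1)$ yields $et = p^{a-b}(p^a - 1)$; multiplying by $e$ and using $e^2 t \le p^a - 1$ forces $e\, p^{a-b}(p^a - 1) \le p^a - 1$, hence $e\, p^{a-b} \le 1$. Therefore $e = 1$, $b = a$, $\theta(1) = p^a$, and $t = p^a - 1 = |G:P|$, so $I_G(\theta) = P$. Now $\theta(1)^2 = p^{2a} \le |P:Z(\theta)| \le |P:Z(P)| \le |P:N| = p^{2a}$ collapses to a chain of equalities, giving $Z(\theta) = Z(P) = N$, so $|Z(P)| = p^a$ and $|P:Z(P)| = p^{2a}$; moreover $\theta(1)^2 = |P:Z(\theta)|$ forces $\theta$ to vanish off $Z(P) = N$.

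It remains to extract the Frobenius action. Because $\theta$ vanishes off $N$ and $N = Z(\theta)$ is central in $P$, the restriction satisfies $\theta_N = p^a \lambda$ for a single $\lambda \in \Irr(N)$, and the identity $\sum_{\psi \in \Irr(P \mid \lambda)} \psi(1)^2 = |P:N| = p^{2a} = \theta(1)^2$ shows $\theta$ is the unique irreducible character of $P$ over $\lambda$; hence the assignment of each degree-$p^a$ character to the linear character of $N$ beneath it is injective. Consequently the $t = p^a - 1$ distinct $G$-conjugates $\theta_1, \dots, \theta_{p^a - 1}$ lie over $p^a - 1$ distinct $\lambda_1, \dots, \lambda_{p^a - 1} \in \Irr(N)$, and $\chi_N = p^a(\lambda_1 + \dots + \lambda_{p^a-1})$. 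Faithfulness of $\chi$ forbids $1_N$ from occurring, since it is $G$-fixed and the transitive conjugation action would then make every $\lambda_i$ trivial, contradicting $t > 1$; thus the $\lambda_i$ are exactly the $p^a - 1$ nontrivial characters of $N$, permuted transitively by $G$. As $P \le C_G(N)$, this transitive action factors through $G/P \cong H$, a group of order $p^a - 1$ acting transitively, hence regularly, on $\Irr(N) \setminus \{1_N\}$ and dually on $N \setminus \{1\}$. A regular action is fixed-point-free, so $N \rtimes H = Z(P)H$ is a Frobenius group whose complement is transitive on the nonidentity kernel elements, i.e. a $2$-transitive Frobenius group, as required.

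The main obstacle is the crux computation of the second paragraph: forcing the two a priori independent inequalities---the degree bound $\theta(1)^2 \le |P:Z(\theta)|$ and the ramification bound $e^2 \le |I_G(\theta):P|$---to interact with the exact value $\chi(1) = p^a(p^a-1)$ so as to pin down $e = 1$, $\theta(1) = p^a$, and $|Z(P)| = p^a$ simultaneously. Everything afterward is bookkeeping on the constituents of $\chi_N$, and the only extra care needed is the degenerate small case $p^a = 2$, where $H$ is trivial and the statement is checked directly.
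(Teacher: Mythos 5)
Your proposal is correct, but it takes a genuinely different route from the paper's. Both arguments begin from the same background facts (via Theorem 7.2 of \cite{hung2016finite} and Lemma 2.2 of \cite{lewissnyder}, $G$ is a Gagola group whose unique minimal normal subgroup $N$ is elementary abelian of order $p^a$ with $N \le Z(P)$, and Schur--Zassenhaus supplies $H$ of order $p^a-1$), but from there the paper simply assembles citations: $Z(P)=N$ is quoted from Theorem 2.5 of \cite{gagola}, $|G:P|=e-1$ from Corollary 2.3 of \cite{gagola}, and the paper explicitly concedes that the two-transitive Frobenius property of $Z(P)H$ is ``implied (but not explicitly proved)'' in \cite{gagola}. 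You instead derive everything from scratch by Clifford theory applied to the degree-$d$ character: writing $\chi_P = e_0(\theta_1+\dots+\theta_t)$, the interplay of the projective-representation bound $e_0^2 t \le p^a-1$, the central-character bound $\theta(1)^2 \le |P:Z(\theta)| \le |P:N|$, and the exact equation $e_0 t\,\theta(1) = p^a(p^a-1)$ forces $e_0=1$, $\theta(1)=p^a$, $I_G(\theta)=P$; the resulting equality chain collapses to $Z(\theta)=Z(P)=N$; and full ramification of each $\theta_i$ over a unique $\lambda_i \in \Irr(N)$ gives a $G$-equivariant bijection between the $p^a-1$ conjugates of $\theta$ and the nontrivial characters of $N$, so $H\cong G/P$ acts regularly on $\Irr(N)\setminus\{1_N\}$, hence Frobeniusly and transitively on $N\setminus\{1\}$. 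Your route is longer but self-contained: it actually proves the two-transitivity statement the paper leaves to an implication in \cite{gagola}, and in effect re-derives Gagola's structural facts (the stabilizer of $\lambda$ is $P$ and $\lambda$ is fully ramified with respect to $P/N$) in this special case, whereas the paper's proof is shorter because it leans on the literature. Three small points to fix in a final write-up: reusing the letter $e$ for the ramification index clashes badly with the paper's convention $e=p^a$, so rename it; the ``dually on $N\setminus\{1\}$'' step needs a one-line justification (Brauer's permutation lemma applies since $|H|$ is coprime to $|N|$, so an element of $H$ fixing only $1_N$ in $\Irr(N)$ fixes only the identity in $N$); and the degenerate case $p^a=2$, where $H=1$ and the faithfulness argument gives no contradiction, must indeed be checked directly, as you note.
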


Using this theorem, we will see that any nonsolvable Isaacs group where $G$ is $p$-closed will occur when $a = 2$, and we will show in Section \ref{p squared} that it seems unlikely that any examples will occur.

Unfortunately, when $G$ is not $p$-closed, the results we can obtain in general are much weaker.  We can show that when $p$ is odd, our group is close to being split over $O_p (G)$.

\begin{introthm}\label{intro 3}
Let $p$ an odd prime and let $a \ge 1$ be an integer.  Suppose that $G$ is an Isaacs group of degree $p^a$, suppose $S$ is a Sylow $2$-subgroup of $G$, take $Z$ be the unique subgroup of order $2$ in $S$, and write $K = O_p (G)$.   Then $G = K N_G (Z)$ and $N_G (Z) \cap K = C_K (Z) > 1$.
\end{introthm}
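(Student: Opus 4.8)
My plan is to separate the statement into its two assertions and treat them independently. Since $|Z| = 2$ we have $\Aut(Z) = 1$, so an element of $G$ normalizes $Z$ exactly when it centralizes the involution $t$ generating $Z$; hence $N_G(Z) = C_G(t)$ and $N_G(Z) \cap K = N_K(Z) = C_K(t)$. The theorem therefore amounts to the two claims $C_K(t) > 1$ and $G = K\,C_G(t)$. Two facts will be used repeatedly: because $p$ is odd, $t$ acts coprimely (by conjugation) on the $p$-group $K = O_p(G)$; and $S$ is cyclic or generalized quaternion --- which is what makes $Z$ the unique subgroup of order $2$ and forces $t \in Z(S)$ --- this being inherited from the Frobenius-complement action of the $p'$-part of $G$ on $N$.

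The inequality $C_K(t) > 1$ is the easy half. If $C_K(t) = 1$, then $t$ is a fixed-point-free automorphism of order $2$ of $K$, and by the classical theorem on such automorphisms $K$ must be abelian (indeed $t$ inverts $K$). This contradicts Theorem~\ref{intro 1}, which states that $K$ is nonabelian, so $C_K(t) > 1$.

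For the equality $G = K\,C_G(t)$ I would use a Frattini argument, the crux being to prove $\langle t\rangle K \trianglelefteq G$, i.e. that $\bar t = tK$ is central in $\bar G = G/K$. Once this is known, $\langle t\rangle$ is a Sylow $2$-subgroup of the normal subgroup $\langle t\rangle K$ (of order $2|K|$ with $|K|$ odd), and the Frattini argument gives $G = \langle t\rangle K\,N_G(\langle t\rangle) = K\,N_G(Z) = K\,C_G(t)$. A natural first step toward centrality is Glauberman's $Z^*$-theorem: since $t \in Z(C_G(t))$ and the Sylow $2$-subgroup $S$ of $C_G(t)$ has a unique involution, $t$ is the only involution of $C_G(t)$ and hence is isolated; the $Z^*$-theorem then yields $t\,O_{2'}(G) \in Z\big(G/O_{2'}(G)\big)$, and the same Frattini argument with $O_{2'}(G)$ in place of $K$ gives the weaker statement $G = O_{2'}(G)\,C_G(t)$.

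The main obstacle is to improve this to $K$: I must pass from $O_{2'}(G)$ down to $O_p(G)$, equivalently show that $t$ centralizes the section $O_{2'}(G)/K$ (by coprime fixed-point lifting this is exactly $O_{2'}(G) = K\,C_{O_{2'}(G)}(t)$). The cleanest way to secure this, and the route I expect to be decisive, is to return to the action on $N$: using that $N$ is the unique minimal normal subgroup one argues that $C_G(N) = K$, so that $\bar G = G/K$ embeds into $\Aut(N) = GL(a,p)$; and using that $G$ is transitive on $\Irr(N)\setminus\{1\}$ (the Gagola character condition), so that $\bar G$ is a transitive linear group with cyclic or generalized quaternion Sylow $2$-subgroups, one identifies the image of $t$ with the central scalar $-1$. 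Then $\bar t = -1 \in Z(GL(a,p))$ is central in $\bar G$, which is precisely what the Frattini argument needs. The delicate points --- and where I expect the real work to lie --- are establishing $C_G(N) = K$ and pinning down the unique involution as $-1$; both must be handled with care in the non-$p$-closed and possibly nonsolvable setting, where no genuine Hall $p'$-complement need exist and one cannot simply invoke the ``unique involution of a Frobenius complement is central'' principle directly.
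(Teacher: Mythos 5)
Your proposal is correct, and its decisive route coincides with the paper's: both prove $K\langle t\rangle \trianglelefteq G$ by showing $tK$ is central in $G/K$, using the embedding $G/K = G/C_G(N) \hookrightarrow \Aut(N)$ together with the fact that the image of $t$ is the central scalar $-1$, and then finish with the Frattini argument; your argument that $C_K(t) > 1$ (a fixed-point-free involution would force $K$ to be abelian, contradicting Theorem \ref{intro 1}) is also exactly the paper's. The real difference is the supporting machinery, and here the comparison favors the paper: the two points you flag as ``delicate'' both follow at once from a single fact of Gagola quoted in Section \ref{background}, namely that for \emph{every} $p'$-subgroup $H$ of a Gagola group, $HN$ is a Frobenius group --- a statement needing no $p$-closedness, no solvability, and no Hall $p'$-complement. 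Taking $H = \langle t\rangle$ shows $t$ acts fixed-point-freely on the abelian group $N$, hence inverts it, which is precisely ``$t$ maps to $-1$''; taking $H$ to be a Sylow $q$-subgroup of $C_G(N)$ for $q \ne p$ shows $C_G(N)$ is a $p$-group, i.e., $C_G(N) = K$ (this is the paper's Lemma \ref{cent of N}). Consequently your detour through Glauberman's $Z^*$-theorem (which, as you note, only reaches $O_{2'}(G)$) and your appeal to the theory of transitive linear groups are unnecessary; if you did want to complete the transitive-linear-group argument, the missing ingredient is again Gagola's: the stabilizer of a nonprincipal $\lambda \in \Irr(N)$ is a Sylow $p$-subgroup, so point stabilizers have odd order and the unique involution of $G/K$ acts without nontrivial fixed points on $\Irr(N)$, hence as $-1$. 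In short, your closing worry is unfounded --- Gagola's theorem supplies the ``involution of a Frobenius complement inverts the kernel'' principle for arbitrary $p'$-subgroups, and that is exactly the sentence with which the paper's proof opens.
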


We begin by focusing on some ``small'' cases.  In particular, we classify all of the Isaacs groups whose degree is $p$ where $p$ is a prime.  We will see that the case $p = 2$ is trivial to dispense with, so we handle the case when $p$ is odd.

\begin{introthm}\label{intro 4}
Let $p$ be an odd prime, and let $G$ be an Isaacs group of degree $p$.  Then $G$ is $p$-closed, the normal Sylow $p$-subgroup is extra-special of order $p^3$, and a Hall $p$-complement is cyclic of order $p-1$.  For each prime $p$, there is (up to isomorphism) one Isaacs group of degree $p$ whose Sylow $p$-subgroup has exponent $p^2$ and there are (up to isomorphism) $(p-1)/2$ Isaacs groups of degree $p$ whose Sylow $p$-subgroup has exponent $p$.
\end{introthm}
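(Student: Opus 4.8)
The plan is to first pin down the order and structure of $K = O_p(G)$ using the results already established, then translate the Isaacs condition into a statement about how a Hall $p$-complement acts on $Z(K)$, and finally reconstruct and count all the resulting groups. Since $e = p = p^1$, Theorem \ref{intro 1} applied with $a = 1$ gives $p < |K:N| \le p^2$; as $|K:N|$ is a power of $p$, this forces $|K:N| = p^2$ and hence $|K| = |N|\,p^2 = p^3$. Because $|G| = p^3(p-1)$, the normal subgroup $K$ is a full Sylow $p$-subgroup of $G$, so $G$ is $p$-closed. Theorem \ref{intro 1} also says $K$ is nonabelian, and every nonabelian group of order $p^3$ with $p$ odd is extra-special, giving the second assertion. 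By Schur--Zassenhaus (noting $\gcd(p^3, p-1) = 1$) we may write $G = K \rtimes H$ with $|H| = p - 1$, and it remains to show $H$ is cyclic and to classify the possibilities.

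For cyclicity I would analyze the character $\chi \in \Irr(G)$ with $\chi(1) = p(p-1)$ guaranteed by the definition of an Isaacs group. Set $Z = Z(K) = N$ of order $p$, and recall that $K$ has exactly $p-1$ nonlinear irreducible characters, each of degree $p$ and each lying over a nontrivial character of $Z$; the action of $H$ on these characters mirrors its action on $\Irr(Z)\setminus\{1\}$. First I would rule out that $\chi$ lies over a linear character of $K$: Clifford theory would force a ramification index $f \ge p$ with $f^2 \le [G:K] = p-1$, which is impossible. Hence $\chi$ lies over a nonlinear $\theta$ of degree $p$. Writing $m = [H:I_H(\theta)]$ for the orbit size and noting $I_G(\theta)/K \cong I_H(\theta)$ is cyclic, $\theta$ extends to $I_G(\theta)$ and Gallagher's theorem gives the degrees in $\Irr(G\mid\theta)$ as $p\,m\,\psi(1)$ with $\psi \in \Irr(I_H(\theta))$. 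Since $\psi(1)^2 \le |I_H(\theta)| = (p-1)/m$, the equality $p\,m\,\psi(1) = p(p-1)$ forces $m = p-1$. Thus $H$ has a single regular orbit on $\Irr(Z)\setminus\{1\}$, so the map $H \to \Aut(Z) \cong \mathbb{Z}_{p-1}$ is onto, hence an isomorphism; in particular $H$ is cyclic of order $p-1$, completing the first three assertions.

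For the classification I would first note that this faithful action on $Z$ is also \emph{sufficient}: inducing a nonlinear character of $K$ up to $G$ yields an irreducible of degree $(p-1)p = d$, so "$H$ acts faithfully on $Z$" is exactly the Isaacs condition. I would then use the standard fact that isomorphism classes of split extensions $K \rtimes H$ with $H \cong \mathbb{Z}_{p-1}$ correspond to conjugacy classes in $\Aut(K)$ of the image $\phi(H)$, taken up to the powering action of $\Aut(H)$. Passing to $\mathrm{Out}(K)$ (inner automorphisms give isomorphic extensions), I would separate the two extra-special types via the $p$-power map $K \to Z$, which for $p$ odd induces a linear functional $K/Z \to Z$: it is zero in the exponent-$p$ (Heisenberg) case and nonzero, with a distinguished kernel line $L \subset K/Z$, in the exponent-$p^2$ case. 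In the Heisenberg case $\mathrm{Out}(K) \cong \mathrm{GL}_2(\mathbb{F}_p)$ with the action on $Z$ given by the determinant of the action on $K/Z$, while a direct computation shows that in the exponent-$p^2$ case $\mathrm{Out}(K)$ has order $p(p-1)$ with a normal Sylow $p$-subgroup, the action on $Z$ again being the determinant on $K/Z$.

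The counting is the heart of the argument. In the exponent-$p^2$ case the relevant $H \cong \mathbb{Z}_{p-1}$ is a complement to the normal Sylow $p$-subgroup of $\mathrm{Out}(K)$; all such complements are conjugate by Schur--Zassenhaus and the powering action adds nothing, so exactly one group arises. In the Heisenberg case the image of a generator of $H$ is a semisimple $c \in \mathrm{GL}_2(\mathbb{F}_p)$ of order $p-1$ whose determinant generates $\mathbb{F}_p^*$; such a $c$ is diagonalizable with distinct eigenvalues $\alpha,\beta \in \mathbb{F}_p^*$ (a scalar is excluded, since its determinant is a nongenerating square), and its conjugacy class is the unordered pair $\{\alpha,\beta\}$. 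Writing $\alpha = g^x,\ \beta = g^y$ for a fixed generator $g$, the condition becomes $\gcd(x+y,p-1) = 1$, the determinant is $g^{x+y}$, and $\Aut(H)$ acts by simultaneous multiplication $\{x,y\} \mapsto \{rx, ry\}$ for units $r$. Normalizing $x + y = 1$ leaves exactly the unordered pairs $\{x, 1-x\}$; since $p-1$ is even, $2x \equiv 1$ has no solution, so these fall into $(p-1)/2$ distinct classes. I expect the main obstacle to be this final step: carrying out the isomorphism classification of the split extensions cleanly, in particular verifying that the conjugacy-plus-powering relation is precisely the isomorphism relation on these groups and that the normalization produces $(p-1)/2$ pairwise non-isomorphic groups rather than overcounting.
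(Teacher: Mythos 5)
Your proposal is correct in substance, and in two of its three parts it takes a genuinely different route from the paper. For the structural part you derive $p$-closure from Theorem \ref{intro 1} with $a=1$ (forcing $|K:N|=p^2$, so $|K|=p^3=|G|_p$) plus the fact that a nonabelian group of order $p^3$ is extraspecial; the paper instead quotes Theorem 4.17 of \cite{lewis2018camina} ($Z(P) \subseteq N$, with equality if and only if $P$ is normal) and gets the same conclusion. For the uniqueness in the exponent-$p^2$ case your argument is genuinely different and cleaner: you use that $\mathrm{Out}(K)$ is a group of order $p(p-1)$ with normal Sylow $p$-subgroup, that the action on $Z$ is the determinant on $K/Z$, and then Schur--Zassenhaus conjugacy of complements; the paper instead constructs an explicit automorphism and runs a hands-on fixed-point analysis (Winter's theorem, a Fitting decomposition, the kernel of the $p$-power map) to show any admissible automorphism is conjugate to it. Your framing of the isomorphism classification is also more careful than the paper's on exactly the points the authors flag in their own notes: conjugacy of image subgroups in $\Aut(K)$ really is equivalent to isomorphism of the extensions here because $K$ is the unique Sylow $p$-subgroup and complements are conjugate, and $p'$-subgroups of $\Aut(K)$ up to conjugacy correspond to $p'$-subgroups of $\mathrm{Out}(K)$ up to conjugacy since $\mathrm{Inn}(K)$ is a $p$-group. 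For the exponent-$p$ count your argument is essentially the paper's: reduce to $\mathrm{GL}_2(\mathbb{F}_p)$, eliminate the scalar, Jordan-block, and nonsplit semisimple classes, and count unordered eigenvalue pairs with generating product; your normalization $x+y=1$ in exponents is the additive version of the paper's fixing $ab=c$, and both yield $(p-1)/2$.

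One step as written is circular: to prove $H$ is cyclic you invoke Gallagher after ``noting $I_G(\theta)/K \cong I_H(\theta)$ is cyclic,'' but at that point nothing is known about the structure of $H$, so you may not assume $I_H(\theta)$ is cyclic --- cyclicity of $H$ is what you are trying to prove. The repair is easy, since you do not need $\theta$ to extend: if $\psi \in \Irr(I_G(\theta) \mid \theta)$ induces $\chi$, then $\psi(1) = f\,\theta(1)$ where $f^2 \le |I_G(\theta):K|$ (the sum of the squares of the ramification indices equals $|I_G(\theta):K|$), and $\chi(1) = m\,\psi(1)$ gives $f = (p-1)/m$, so $f^2 \le (p-1)/m = f$ forces $f=1$ and $m = p-1$, which is all you use. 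Alternatively, cyclicity is immediate without any character theory, and this is the paper's route: $K = C_G(N)$ by Lemma \ref{cent of N}, so $H \cong G/K$ embeds in $\Aut(N) \cong \Z_{p-1}$ by the $N/C$ theorem, and $|H| = p-1$ forces $H$ cyclic. Finally, your two asserted facts about the exponent-$p^2$ group --- that $\mathrm{Out}(K)$ has order $p(p-1)$ with normal Sylow $p$-subgroup and that the action on $Z$ is the determinant --- are correct, but in a final write-up they need a proof or a citation to Winter \cite{winter}; a short direct computation with the presentation $\langle x,y \mid x^{p^2}=y^p=1,\ x^y = x^{1+p}\rangle$ also suffices.
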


We also consider the Isaacs groups of degree $p^2$.  We will see that when $G$ is a $p$-closed Isaacs group of degree $p^2$, then the normal Sylow $p$-subgroup is either semi-extraspecial or has nilpotence class $3$ and the structure is very limited.  When $G$ is not $p$-closed, we see that $O_p (G)$ has order $p^5$ and again we see that there are two possible structures for $O_p (G)$, one of which has nilpotence class $2$ and the other has nilpotence class $3$.  We see that the structure of $G/O_p (G)$ will be prescribed.  In particular, we will see in Theorem \ref{p^2 not pclosed} that any examples for $p \ge 5$ will necessarily be nonsolvable.

We determine all of the Isaacs groups for degrees $4$, $9$, and $25$.  We will see that for $e = 4$ there are four Isaacs groups that are $2$-closed and all of them have a normal Sylow $2$-subgroup that is semi-extraspecial, and there are two Isaacs groups that are not $2$-closed both of which have that $O_2 (G)$ has nilpotence class $2$.  One these has previously appeared in \cite{gagola} and one of which is new.   We will see that there are thirteen Isaacs groups of degree $9$ that are $3$-closed and one that is not $3$-closed.  The one that is not $3$-closed has previously appeared in \cite{gagola}.  Finally, there are twenty-four Isaacs groups of degree $25$ that are $5$-closed and two that not $5$-closed.  We close by noting that these two that are not $5$-closed are not solvable (or even $5$-solvable) answering (negatively) a question raised in \cite{hung2016finite} about whether Isaacs groups must be solvable.  One of the examples has that $O_5 (G)$ has nilpotence class $3$ and this example has previously appeared in \cite{nonsolv}.  The second example has that $O_5 (G)$ has nilpotence class $2$ and is new in this paper.  This example is important since Isaacs groups are examples of Camina pairs, and this is the third example of a nonsolvable Camina pair after the example in \cite{nonsolv} and the ones in Theorem 1.20 of \cite{GGLMNT}.

\section{Background}\label{background}

In this section, we provide some background on the results that we need from the literature.  First, we mention two transitive Frobenius groups.  A nice description of two-transitive Frobenius groups can be found in Section 5 of \cite{dmn}.  Two-transitive Frobenius groups can be obtained using a near-field.   There is an infinite family of near-fields, often called Dickson near-fields, and seven exceptional near-fields.  The Dickson near-fields and four of the exceptional near-fields give two-transitive Frobenius groups that are solvable.  There are three exceptional near fields that give two-transitive Frobenius groups that are not solvable.  These three near fields have orders $11^2$, $29^2$, and $59^2$.  In particular, there exist three nonsolvable two-transitive Frobenius groups: $(Z_{11} \times Z_{11})\rtimes {\rm SL} (2,5)$, $(Z_{29} \times Z_{29}) \rtimes ({\rm SL} (2,5) \times Z_7)$, and $(Z_{59} \times Z_{59}) \rtimes ({\rm SL} (2,5) \times Z_{29})$.  Another perspective on two-transitive Frobenius groups can be found in Sections 18 and 19 of \cite{pass}.

As we state in the introduction, Isaacs groups are examples of groups that were first studied by Gagola in \cite{gagola}.  In that paper, Gagola considers a group $G$ that has an irreducible character that vanishes on all but two conjugacy classes of $G$.  We will say that $\chi \in \Irr (G)$ is a {\it Gagola character} if $\chi$ vanishes on all but two conjugacy classes of $G$, and we will say that $G$ is a {\it Gagola group} if $G$ has a Gagola character.   Assuming that $|G| \ne 2$, Gagola shows that $G$ must have a unique minimal normal subgroup $N$ and that $N$ will be an elementary abelian $p$-group for some prime $p$.

Under the additional assumption that $G$ is not a $p$-group, Gagola shows that $N < O_p (G)$ and if $\lambda \in \Irr (N)$, then there is a Sylow $p$-subgroup $P$ of $G$ so that $P$ is the stabilizer of $\lambda$ in $G$ and $\lambda$ is fully-ramified with-respect to $P/N$.  If $|N| = p^a$, then there is an integer $b$ so that $|P:N| = p^{2b}$, and we have $|G:P| = p^a -1$ and $\chi (1) = p^b(p^a - 1)$.  In addition, if $H$ is a $p'$-subgroup of $G$, then $HN$ is a Frobenius group.

Let $N$ be a normal subgroup of $G$.  We will write $\Irr (G \mid N)$ for the set of irreducible characters of $G$ that do not have $N$ in the kernel.

Gagola groups are a special case of a broader class of groups we will need to consider.  Let $G$ be a group and let $N$ be a normal subgroup.  We say that $(G,N)$ is a {\it Camina pair} if every element $x \in G \setminus N$ is conjugate to all of the elements in the coset $xN$.  There are many equivalent conditions for this definition.  One of these is that all of the characters in $\Irr (G \mid N)$ vanish on $G \setminus N$.  When $G$ is a Gagola group with Gagola character $\chi$ and $N$ is the unique minimal normal subgroup, it is not difficult to show that $\Irr (G \mid N) = \{ \chi \}$ and that $\chi$ vanishes on $G \setminus N$, so $(G,N)$ will be a Camina pair.  The second author has written an expository paper that covers many of the known results for Camina pairs (see \cite{lewis2018camina}).

It is known that if $(G,N)$ is a Camina pair, $N$ is a $p$-group, and $P$ is a normal Sylow $p$-subgroup of $G$, then $(P,N)$ is a Camina pair.  Conversely, if $(P,N)$ is a Camina pair and $H$ is a $p'$-group that acts on $P$ so that $HN$ is a Frobenius group, then taking $G = PH$ we have that $(G,N)$ is a Camina pair.  Unfortunately, no such nice description has been found for Camina pairs where $N$ is a $p$-subgroup and a Sylow $p$-subgroup is not normal. 

Another related concept that we need is of a semi-extraspecial group.  Let $p$ be a prime.  We say a $p$-group $G$ is semi-extraspecial if every maximal subgroup $N$ in $Z(G)$ satisfies that $G/N$ is an extraspecial group.   It appears that semi-extraspecial groups were first studied by Beisiegel \cite{beis}.  One can show that $G' = Z(G)$ and every irreducible character of $G$ is fully-ramified with respect to $G/G'$, so there is a positive integer $n$ so that $|G:G'| = p^{2n}$ and $\cd (G) = \{ 1, p^n \}$.  This shows that $(G,G')$ is a Camina pair.  In fact, any Camina pair of the form $(G,G')$ where $G$ is nilpotent of nilpotence class $2$, then $G$ will be a semi-extraspecial $p$-group for some prime $p$.  The second author also has a paper where many of the known properties of semi-extraspecial groups are discussed (see Sections 2 - 7 of \cite{ses-expos}).

If $G$ is semi-extraspecial with $|G:G'| = p^{2n}$ and $|G'| = p^m$, then Beisiegel showed that $m \le n$ (see Satz 1 of \cite{beis}.  The semi-extraspecial groups where $m = n$ seem to be especially useful so they are called {\it ultraspecial}.  

Now, Camina pairs of the form $(G,G')$ are also known as Camina groups, and it is known that all Camina groups of nilpotence class $2$ are Camina pairs (see Theorem 1.2 of \cite{ver}).  It is not difficult to see that the extraspecial groups are precisely the semi-extraspecial groups whose centers have order $p$.   For each prime $p$ and each positive integer $m$, there are up to isomorphism two extraspecial groups of order $p^{2m +1}$.  When $p$ is odd, one has exponent $p$ and one has exponent $p^2$.    

When classifying $p$-groups, it is often useful to apply isoclinisms and this is certainly true for semi-extraspecial groups.  We say that groups $G$ and $H$ are {\it isoclinic} if there exist isomorphisms $\alpha: G/Z(G) \rightarrow H/Z(H)$ and $\beta: G' \rightarrow H'$ so that $[\alpha (g_1Z(G)),\alpha(g_2Z(G))] = \beta ([g_1,g_2])$.  One word of explanation is needed for the last equation. all choices for elements in the cosets $\alpha (g_i Z(G))$ yield the same answer, so that equation is actually well-defined. It is not difficult to see that if $G$ and $H$ are isomorphic, then they are isoclinic.  However, the extraspecial groups of the same order are always isoclinic, so being isoclinic is weaker than isomorphism.  It is not difficult to show that isoclinism is an equivalence relation.  Interestingly, it does not need to preserve order.  However, if $G$ and $H$ are isoclinic and have the same order, then $G$ is semi-extraspecial if and only if $H$ is semi-extraspecial.  When $p$ is odd, it is known that every semi-extraspecial group is isoclinic to a unique group of exponent $p$.

Now, Verardi proves in Corollary 5.11 of \cite{ver} for every prime $p$ that there is a  unique isoclinism class of ultraspecial groups of order $p^6$.

\section{General results}

We know from Theorem 1.2 of \cite{hung2016finite} and Theorem B (b) of \cite{durjen} that for $G$ to be an Isaacs group of degree $e$ that $e$ must be a power of prime, and so, for the remainder of the paper, we will be assuming that $e = p^a$ for some prime $p$ and an integer $a \ge 2$.  In this section, we are going to include some general results.  Then we obtain some results for general primes $p$ when $e = p^2$, and we follow up with a description of the situation for $e = 4$, $9$, and $25$.  

Recall that we are considering an Isaacs group $G$ where $|G| = e^4 - e^3$ with $d = e^2 -e$.   Applying Theorem 7.2 of \cite{hung2016finite}, we see that $G$ must be a Gagola group whose unique minimal normal subgroup has order $e$.  Using Lemma 2.2 of \cite{lewissnyder}, we know that $|P:N| = e^2$ and $d = e (|N| - 1)$.  Since $d = e(e-1)$, this implies that $|N| = e$.  By Corollary 2.3 of \cite{gagola}, we know that $d = e |G:P|$, so $|G:P| = e - 1$ where $P$ is a Sylow $p$-subgroup of $G$.   In light of Chillag and Mann \cite{chma} and amplified by our recent paper \cite{mypre}, we see that Gagola groups may be viewed as Camina pairs and the Camina pairs where the Camina kernels are $p$-groups breaks into two cases depending on whether or not $G$ is $p$-closed.  

In fact, we will see that when $G$ is $p$-closed, we can give an algorithm for constructing all groups that satisfy our condition.  In the non $p$-closed situation, we will only describe the groups that occur when $e = p^2$.  At this time, we do not have enough understanding to describe what occurs when $e = p^a$ for $a \ge 3$ and $G$ is not $p$-closed.

\begin{thm} \label{pclosed}
Let $p$ be a prime and $a \ge 1$ is an integer.  Then $G$ is an Isaacs group of degree $e = p^a$ that is $p$-closed if and only if $G$ has a normal Sylow $p$-subgroup $P$ so that $(P,Z(P))$ is a Camina pair with $|Z(P)| = e$ and $|P:Z (P)| = e^2$, and where $G$ has a Hall $p$-complement $H$ of order $e-1$ so that $Z(P) H$ is a two transitive Frobenius group.   \end{thm}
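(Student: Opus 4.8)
The plan is to prove the two implications by linking the Gagola fully-ramified structure to the Camina condition through the center of a single irreducible character lying over $N$. For the forward direction, assume $G$ is a $p$-closed Isaacs group of degree $e = p^a$. The preliminary analysis at the start of this section already gives that $G$ is a Gagola group with unique minimal normal subgroup $N$ of order $e$, that $P = O_p(G)$ is a normal Sylow $p$-subgroup with $|P:N| = e^2$, and that $|G:P| = e-1$; since $(|P|,|G:P|) = 1$, a Hall $p$-complement $H$ of order $e-1$ exists. Because a Gagola group is a Camina pair on its unique minimal normal subgroup and $P$ is a normal Sylow $p$-subgroup, the background results give that $(P,N)$ is a Camina pair. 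The crux is to show $N = Z(P)$. Since $Z(P)$ is characteristic in $P$ and hence a nontrivial normal subgroup of $G$, it contains the unique minimal normal subgroup, so $N \le Z(P)$. For the reverse containment I would fix $\lambda \in \Irr(N)$ and let $\psi \in \Irr(P)$ be the fully-ramified constituent over it; as $\lambda$ is $P$-invariant and linear, $\psi\vert_N = e\lambda$ and $\psi(1) = e$, so every element of $N$ lies in the character center $Z(\psi) = \{x \in P : |\psi(x)| = \psi(1)\}$. The standard bound $\psi(1)^2 \le |P:Z(\psi)|$ then yields $e^2 = \psi(1)^2 \le |P:Z(\psi)| \le |P:N| = e^2$, forcing $Z(\psi) = N$; since always $Z(P) \le Z(\psi)$, we conclude $Z(P) = N$. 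Thus $|Z(P)| = e$, $|P:Z(P)| = e^2$, and $(P,Z(P))$ is a Camina pair. Finally, $NH$ is Frobenius (Gagola, as $H$ is a $p'$-group), so $H$ acts fixed-point-freely on $N$; a count shows each orbit on $N \setminus \{1\}$ has size $|H| = e-1 = |N\setminus\{1\}|$, so $H$ is regular there, which means precisely that $Z(P)H = NH$ is a two-transitive Frobenius group.

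Conversely, assume $P$, $Z(P)$, and $H$ are as in the statement. Then $G$ is $p$-closed because $P$ is a normal Sylow $p$-subgroup, and $|G| = |P|\,|H| = e^2 \cdot e \cdot (e-1) = e^4 - e^3$. It suffices to produce an irreducible character of degree $d = e^2 - e$, since then $|G| = e^3(e-1) = d(d+e)$ exhibits $G$ as an Isaacs group of degree $e$. I would build this character by Clifford theory over $P$. Fix a nontrivial $\lambda \in \Irr(Z(P))$; it is $P$-invariant because $Z(P)$ is central, and because $(P,Z(P))$ is a Camina pair every constituent over $\lambda$ vanishes off $Z(P)$. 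The norm relation $|P| = \sum_{g \in P} |\psi(g)|^2 = |Z(P)|\,\psi(1)^2$ then forces $\psi(1)^2 = |P:Z(P)| = e^2$ for any such constituent $\psi$, and orthogonality shows there is a unique $\psi = \psi_\lambda \in \Irr(P)$ over $\lambda$, of degree $e$. Since $Z(P)H$ is a two-transitive Frobenius group, $H$ acts regularly on $\Irr(Z(P)) \setminus \{1\}$; as $\psi_\lambda^h = \psi_{\lambda^h}$, the stabilizer of $\psi_\lambda$ in $G$ is exactly $P$, so $\chi = (\psi_\lambda)^G$ is irreducible of degree $|G:P|\,\psi_\lambda(1) = (e-1)e = e^2 - e = d$, as required.

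The routine parts are the order and degree bookkeeping. The two places that need genuine care are the identification $Z(P) = N$ in the forward direction, where the degree equation must be squeezed exactly to force $Z(\psi) = N$, and, in the converse, the passage from the regular action of $H$ on $Z(P) \setminus \{1\}$ to a regular action on $\Irr(Z(P)) \setminus \{1\}$. I expect this last point to be the main obstacle: it relies on the fact that a coprime fixed-point-free action on an abelian group is fixed-point-free on the dual group of equal size, hence regular there as well, and one must also verify cleanly that $H$ permutes the unique fully-ramified constituents according to $\psi_\lambda^h = \psi_{\lambda^h}$ so that the induction $(\psi_\lambda)^G$ is irreducible of the claimed degree.
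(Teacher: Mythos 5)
Your proof is correct, and its overall architecture matches the paper's: the forward direction runs through the section's preliminary reductions (Gagola structure, $|N| = e$, $|P:N| = e^2$, $|G:P| = e-1$, Schur--Zassenhaus for $H$), and the converse constructs the degree-$d$ character exactly as the paper does, by taking a nonprincipal $\lambda \in \Irr(Z(P))$, extracting the unique fully-ramified constituent $\psi_\lambda$ of degree $e$ from the Camina-pair condition, transferring the fixed-point-free action of $H$ from $Z(P)$ to its dual to get a trivial stabilizer, and inducing to get $e(e-1) = d \in \cd(G)$.

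Where you genuinely diverge is that you prove, rather than cite, the two key facts in the forward direction. The paper quotes Theorem 2.5 of Gagola for $N = Z(P)$; you derive it directly via the character-center squeeze: $N \le Z(\psi)$ since $\psi\vert_N = e\lambda$, so $e^2 = \psi(1)^2 \le |P:Z(\psi)| \le |P:N| = e^2$ forces $Z(\psi) = N$, and $Z(P) \le Z(\psi)$ always, giving $Z(P) = N$ together with the containment $N \le Z(P)$ coming from uniqueness of the minimal normal subgroup. The paper declares the two-transitivity of $Z(P)H$ to be ``implied (but not explicitly proved)'' in Gagola's paper; you prove it by the orbit count, noting that fixed-point-freeness forces every $H$-orbit on $N \setminus \{1\}$ to have size $e-1 = |N \setminus \{1\}|$, so $H$ is regular there. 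You also obtain the Camina pair $(P,N)$ from the background transfer result for normal Sylow subgroups, whereas the paper deduces it from $e \in \cd(P)$. Your route buys a self-contained proof where the paper outsources the hardest points to the literature, at the cost of some length; both are sound. One trivial slip: in the forward direction you should take $\lambda \in \Irr(N)$ nonprincipal, since the principal character has no fully-ramified constituent (you do say ``nontrivial'' correctly in the converse).
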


\begin{proof}
We are assuming that $e = p^a$ where $p$ is a prime and $a \ge 2$ is an integer.  First, we suppose that $G$ is an Isaacs group that is $p$-closed, so that $|G| = e^4 - e^3 = e^3 (e-1)$.   Let $P$ be a Sylow $p$-subgroup of $G$.  We may use the Schur-Zassenhaus theorem to see that $G$ has a Hall $p$-complement $H$.  We saw above that $|G:P| = e - 1$, so $|H| = e-1$.  Note that $|P| = e^3$ and since $G$ is $p$-closed, we see that $P$ is normal in $G$,  As mentioned above, we know that $G$ has a unique minimal normal subgroup $N$ and that $|N| = e$.  It follows that $N \le P$.   In Theorem 2.5 \cite{gagola}, it is proved that $N = Z (P)$.  This implies that $|Z(P)| = e$ and $|P:Z(P)| = e^2$.  Since $d = e^2 - e = e(e-1)$, we see that $e \in \cd(P)$.  This implies that $(P,Z(P))$ is a Camina pair.  It is implied (but not explicitly proved) in \cite{gagola} that $Z(P) H$ is a two transitive Frobenius group.

Conversely, suppose $G$ has a normal Sylow $p$-subgroup $P$ so that $(P,Z(P))$ is a Camina pair so that $|Z(P)| = e$ and $|P:Z (P)| = e^2$ and $G$ has a Hall $p$-complement $H$ of order $e-1$ so that $Z(P) H$ is a two transitive Frobenius group.  We see that $|P| = |P:Z(P)||Z(P)| = e^2 \cdot e = e^3$ and $|G| = |P||H| = e^3 (e-1) = e^4 - e^3$.  Obviously, $G$ is $p$-closed.  Let $\lambda$ be a nonprincipal irreducible character of $Z(P)$.  Since $(P,Z(P))$ is a Camina pair, we know that $\lambda$ is fully-ramified with respect to $P/Z(P)$.  That is, $\lambda^P$ has a unique irreducible constituent $\theta$.  We see that $\lambda$ is invariant in $P$, and since $Z(P) H$ is a Frobenius group, we have $C_H (\lambda) = 1$.  Since $\lambda$ is fully-ramified, we see that $C_H (\theta) = 1$, and so, the stabilizer of $\theta$ in $G$ is $P$.  This yields $q(q-1) \in \cd(G)$, and the theorem is proved.
\end{proof}

Note in the notation of Theorem \ref{pclosed}, we take $N = Z (P)$ and we know that $G$ is a Gagola group so that $N$ is the unique minimal normal subgroup or alternatively, we can say that $(G,N)$ is a Camina pair.

Using Theorem \ref{pclosed}, we have an algorithm for finding all of the $p$-closed groups that satisfy Snyder's equality with $|G| = e^4 - e^3$.  

(1) Find all Camina pairs $(P, Z(P))$ so that $|P| = e^3$ and $|Z(P)| = e$.  Notice that this equivalent to finding all groups $P$ of order $e^3$ so $Z(P)$ is elementary abelian of order $e$ and $e \in \cd (P)$.

(2) For groups $P$ satisfying (1) find all subgroups $H$ of ${\rm Aut} (P)$ having order $e-1$ so that $H$ acts 2-transitively and Frobeniusly on $Z(P)$.

(3) For groups $P$ satisfying (1) and $H$ satisfying (2), take $G = P \rtimes H$.

For $e = 4$, we do not need this algorithm.  We can actually search the small group library for all examples.  We will used this algorithm to describe all of the $p$-closed examples when $e = 9$ and when $e = 25$.

We will also describe how to find nonsolvable examples arising from nonsolvable two transitive Frobenius groups. In particular, we see that the possibilities for nonsolvable Isaacs groups that are $p$-closed occur only when $a = 2$ and $p$ is one of $11$, $29$, or $59$.  We will discuss these possibilities further when we specialize to $e = p^2$. 

\begin{cor}\label{nonsolvable p-closed}
Let $p$ be a prime and let $a \ge 1$ be an integer.   Suppose that $G$ is an Isaacs group that is $p$-closed with notation as in Theorem \ref{pclosed}.  Then $G$ is nonsolvable if and only if $Z(P)H$ is a nonsolvable two transitive Frobenius group.  In this case, $e = p^2$ and one of the following occurs: (1) $p = 11$ and $H \cong {\rm SL}_2 (5)$, (2) $p = 29$ and $H \cong {\rm SL}_2 (5) \times Z_{11}$, or (3) $p = 59$ and $H \cong {\rm SL}_2 (5) \times Z_{29}$.
\end{cor}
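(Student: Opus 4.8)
The plan is to reduce the solvability question to the Hall complement $H$ and then to invoke the classification of two-transitive Frobenius groups recalled in Section \ref{background}. By Theorem \ref{pclosed} we may write $G = P \rtimes H$, where $P$ is the normal Sylow $p$-subgroup, $H$ is a Hall $p$-complement of order $e - 1$, and $Z(P) H$ is a two-transitive Frobenius group whose kernel $Z(P)$ has order $e = p^a$. Everything will follow by tracking solvability through these normal subgroups and then reading off the short list of nonsolvable two-transitive Frobenius groups.

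For the biconditional I would show that the solvability of all three groups $G$, $Z(P)H$, and $H$ coincides. Since $P$ is a $p$-group it is solvable, and $G/P \cong H$; because solvability is closed under extensions, $G$ is solvable if and only if $H$ is solvable. Likewise $Z(P)$ is abelian, hence solvable, and $(Z(P)H)/Z(P) \cong H$, so $Z(P)H$ is solvable if and only if $H$ is solvable. Combining these, $G$ is solvable if and only if $Z(P)H$ is solvable; taking contrapositives gives the asserted equivalence of nonsolvability.

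For the explicit list I would use that $Z(P)H$ is, in the nonsolvable case, a nonsolvable two-transitive Frobenius group of degree $|Z(P)| = e = p^a$. According to the classification described in Section \ref{background}, the only nonsolvable two-transitive Frobenius groups arise from the three exceptional near-fields of orders $11^2$, $29^2$, and $59^2$. Since the degree $e = p^a$ must equal one of these orders and $p$ is prime, we are forced to $a = 2$ and $p \in \{11, 29, 59\}$, whence $e = p^2$. The complement $H$ is then determined by $|H| = e - 1 = p^2 - 1$: factoring out the nonsolvable part ${\rm SL}_2(5)$ of order $120$ leaves a cyclic factor of order $(p^2 - 1)/120$, which is $1$, $7$, and $29$ for $p = 11$, $29$, and $59$ respectively, matching the three groups listed in Section \ref{background}.

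The argument is essentially bookkeeping once Theorem \ref{pclosed} is available, and I expect no substantive obstacle. The only point requiring care is to match the abstractly given group $Z(P)H$ against the near-field classification: one must confirm that its permutation degree on $Z(P)$ equals $e$ (so that $|H| = e - 1$ and the kernel $Z(P)$ is elementary abelian of order $p^2$), and that among all admissible degrees precisely the three square prime powers $11^2$, $29^2$, $59^2$ support a nonsolvable complement. Both facts are supplied by the classification recalled in Section \ref{background}, so the corollary follows.
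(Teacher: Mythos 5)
Your proposal is correct and takes essentially the same route as the paper: the paper's entire written proof is the one-line observation that, since $P$ is solvable, $G$ is nonsolvable if and only if $H$ is, with the equivalence to $Z(P)H$ and the explicit list left implicit via the near-field classification recalled in Section \ref{background}, exactly as you spell out. One small bonus of your bookkeeping: your computation $(29^2-1)/120 = 7$ agrees with the background section's group $({\rm SL}(2,5) \times Z_7)$ and shows that the $Z_{11}$ appearing in case (2) of the corollary's statement is a typo for $Z_7$.
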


\begin{proof}
Since $P$ is solvable, $G$ is nonsolvable if and only if $H$ is nonsolvable.
\end{proof}

In the rest of this section, we are going to obtain information regarding $O_p (G)$ when $G$ is an Isacs group of degree $p^a$ that applies whether or not $G$ is $p$-closed.  We are going to obtain this information by looking at the Sylow subgroups of $G$ for primes not equal to $p$ that act on $p$.  There are going to be two interesting cases: the Zsigmondy prime case and the case of $2$ when $p$ is odd.  We will begin by describing the Zsigmondy prime case.

Let $m$ and $a$ be positive integers.  We say the prime $p$ is a {\it Zsigmondy prime divisor} of $m^a - 1$ if $p$ divides $m^a - 1$ and $p$ does not divide $m^b -1$ for $1 \le b < a$.  In the Zsigmondy prime theorem, Zsigmondy proved that there exist Zsigmondy primes for all pair $(m,a)$ except when $m$ is a Mersenne prime and $a = 2$ or $m = 2$ and $a = 6$.  In this next lemma we see that Zsigmondy primes have a useful property in our situation, and we can find a substitute when they do not exist.  However, we consider groups in a more general setting.

We will say that $p$ is a {\it generalized Zsigmondy prime divisor} of $m^a - 1$ if $p$ is a prime and there is a positive integer $n$ so that $p^n$ divides $m^a - 1$ and $p^n$ does not divide $m^b - 1$ for any integer $b$ with $1 \le b < a$.  It is easy to see that Zsigmondy prime divisors will be generalized Zsigmondy prime divisors.  Also, when $m$ is a Mersenne prime and $a = 2$, it is not difficult to see that $2$ will be a generalized Zsigmondy prime divisor for $m^2 - 1$.  A quick check shows shows that $p = 3$ (with $n =2$ ) will be a generalized Zsigmondy prime divisor when $m =2$ and $a = 6$.  Thus, all pairs $(m,a)$ have a generalized Zsigmondy prime divisor.  We now obtain a lemma involving generalized Zsigmondy prime divisors.  Notice that we have fairly general hypotheses on this lemma.

\begin{lem} \label{prime divisors}
Let $p$ be a prime and let $a \ge 1$ be an integer.  Let $G$ be a group with $K = O_p (G)$ with a subgroup $N \le Z(K)$ such that $|N| = p^a$ and $N < K$.  Suppose for every subgroup $H \le G$ with $p$ not dividing $|H|$ that the subgroup $NH$ is a Frobenius group and that $p^a-1$ divides $|G|$.  Take $q$ to be a generalized Zsigmondy prime divisor of $p^a - 1$, take $Q$ to be a Sylow $q$-subgroup of $G$, and $Z$ to be the unique subgroup of order $q$ in $Q$.  Then the following are true:
\begin{enumerate}
\item 
If $K' \le X < Y \le K$ are $Q$-invariant subgroups so that $Y/X$ that is irreducible under the action of $Z$, then either $Q$ centralizes $Y/X$ or $Q$ acts faithfully on $Y/X$ and $|Y/X| = p^a$.  
\item $p^a$ divides $|K:K'|$ and $Q$ acts nontrivially on $K/K'$.
\item If $|K:K'| = p^a$, then $Q$ acts faithfully on $K/K'$ and $K/K'$ is irreducible under the action of $Q$.
\end{enumerate}
\end{lem}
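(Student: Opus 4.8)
The plan is to extract everything from the action of the single prime $q$ on the chief-type factors of $K$, using the generalized Zsigmondy inequality as the arithmetic engine. First I would record the structural facts that make $Z$ meaningful. Taking $H=Q$ in the hypothesis, $NQ$ is a Frobenius group with kernel $N$, so $Q$ is a Frobenius complement; since $Q$ is itself a $q$-group it is therefore cyclic (for $q$ odd) or cyclic or generalized quaternion (for $q=2$), and in every case it has a unique subgroup $Z$ of order $q$. Moreover the complement $Q$, and hence every nontrivial subgroup of $Q$, acts without nontrivial fixed points on $N$; in particular $Q$ acts faithfully on $N$. The key preliminary is then: because $q^n \mid p^a-1$ while $q^n \nmid p^b-1$ for $1\le b<a$ and $q^n \mid |Q|$ (as $p^a-1$ divides $|G|$), the group $Q$ must act irreducibly on $N$, and $a=\mathrm{ord}_{|Q|}(p)$ is the common $\mathbb{F}_p$-dimension of every faithful irreducible $\mathbb{F}_p[Q]$-module. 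Indeed, if $N$ had a proper nonzero $Q$-submodule then some irreducible constituent of dimension $m<a$ would carry a faithful $Q$-action (faithfulness coming from fixed-point-freeness), forcing $|Q|\mid p^{m}-1$ and hence $q^n\mid p^{m}-1$ with $m<a$, a contradiction.

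With this in hand I would prove (1). Let $V=Y/X$ be a $Q$-invariant, $Z$-irreducible section with $K'\le X$; since every $Q$-submodule of $V$ is a $Z$-submodule, $V$ is $Q$-irreducible, and $V$ is an elementary abelian $p$-group because $K/K'$ is abelian and $q\ne p$. Put $Q_0=C_Q(V)$ and split on the trichotomy forced by the unique minimal subgroup. If $Q_0=Q$ then $Q$ centralizes $V$. If $Q_0=1$ then $V$ is a faithful irreducible $\mathbb{F}_p[Q]$-module, so $\dim_{\mathbb{F}_p}V=\mathrm{ord}_{|Q|}(p)=a$ by the preliminary, giving $|V|=p^a$. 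The remaining possibility $1\ne Q_0\ne Q$ is where the work lies: since $Z$ is the unique minimal subgroup it lies in $Q_0$, so $Z$ centralizes the $Z$-irreducible $V$, forcing $\dim V=1$; then $Q/Q_0$ embeds in $\Aut(V)=\mathbb{F}_p^\times$, so $q\mid p-1$. For a genuine Zsigmondy prime with $a\ge 2$ this is impossible because $q\nmid p^1-1$, and the case is eliminated. This settles (1) whenever $p^a-1$ has a genuine Zsigmondy prime.

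Granting (1), parts (2) and (3) are short. For (2), I would first show $Q$ acts nontrivially on $K/K'$: otherwise $Q$ centralizes $K/K'$, hence $K/\Phi(K)$ since $K'\le\Phi(K)$, and then by coprime action $Q$ centralizes $K$ and in particular $N$, contradicting fixed-point-freeness on $N$. Thus $[K/K',Q]\ne 1$, and refining a $Q$-composition series of this nontrivial part to a $Q$-invariant $Z$-irreducible factor $V$ on which $Q$ acts nontrivially (for a genuine Zsigmondy prime such a factor exists, since a $Q$-irreducible factor of dimension $a$ is already $Z$-irreducible), part (1) gives $|V|=p^a$; since $V$ is a section of the abelian group $K/K'$ this yields $p^a\mid|K:K'|$, and $Q$ acts nontrivially as claimed. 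For (3), assume $|K:K'|=p^a$. By (2) there is a $Z$-irreducible $Q$-invariant factor $V$ with $Q$ acting nontrivially, so by (1) $|V|=p^a=|K:K'|$; hence $X=K'$ and $Y=K$, so $K/K'$ itself is $Z$-irreducible, a fortiori $Q$-irreducible, and (1) gives that $Q$ acts faithfully on it.

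I expect the genuine obstacle to be the two pairs $(p,a)$ for which $p^a-1$ has no ordinary Zsigmondy prime and one must fall back on a generalized Zsigmondy prime $q$ with $d:=\mathrm{ord}_q(p)<a$. There $Z$-irreducibility and $Q$-irreducibility genuinely diverge: a nontrivial $Z$-irreducible factor has dimension $d<a$, so the faithful alternative of (1) cannot be reached through $Z$, and one must instead show that no such factor survives as a $Q$-invariant section, running the irreducibility argument at the level of $Q$ rather than of $Z$. The hardest sub-case is $q=2$ with $p$ a Mersenne prime and $a=2$, where $q\mid p-1$ so the elimination in the middle case of (1) fails; here I would rule out a spurious one-dimensional sign constituent of $K/K'$ by a direct analysis of how the involution in $Z$ and the full $2$-group $Q$ act, comparing $C_{K/K'}(Z)$ with $C_{K/K'}(Q)$, rather than by the uniform arithmetic argument.
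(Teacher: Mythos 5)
Your main line of argument is, at its core, the same mechanism the paper uses, just packaged differently: the unique minimal subgroup $Z$ of $Q$ forces $C_V(x)\le C_V(Z)$ for every nontrivial $x\in Q$ (since $Z\le\langle x\rangle$), so on any section where $Z$ is fixed-point-free the whole of $Q$ acts Frobeniusly; then regular-orbit counting gives $|Q|\mid |V|-1$, and since $q^n\mid |Q|$ the generalized Zsigmondy inequality pins $|V|=p^a$. The paper runs this through a Frattini argument and the Fitting decomposition $K/\Phi(K)=C_K(Z)\Phi(K)/\Phi(K)\times[K,Z]\Phi(K)/\Phi(K)$, while you run it through the trichotomy on $Q_0=C_Q(V)$; that difference is cosmetic. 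Two repairs to your write-up: in the faithful case you should conclude $|V|=p^a$ by the orbit count just described rather than by the asserted-but-unproved claim that all faithful irreducible $\mathbb{F}_p[Q]$-modules have dimension ${\rm ord}_{|Q|}(p)$ (nontrivial when $Q$ is generalized quaternion); and your middle-case elimination needs only $q\nmid p-1$, so it already disposes of the exceptional pair $p=2$, $a=6$ (where $q=3\nmid 1$), leaving the Mersenne case $q=2\mid p-1$ as the only one you genuinely leave open.

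That open case is a real gap in your proposal, but you should know it cannot be closed: reading the Frobenius hypothesis for nontrivial $H$ (as one must, since $N\cdot 1$ is never Frobenius), conclusion (1) is actually false there. Take $p=3$, $a=2$, let $N=Z_3\times Z_3$ carry the fixed-point-free action of the quaternion group $Q_8\le{\rm SL}(2,3)$, let $W\cong Z_3$ carry the action of $Q_8$ through a sign character (kernel $\langle i\rangle$), and set $G=(N\times W)\rtimes Q_8$, so $K=O_3(G)=N\times W$ is abelian, $N\le Z(K)$, $N<K$, and every nontrivial $3'$-subgroup of $G$ is conjugate into $Q_8$ and hence acts Frobeniusly on $N$; also $p^a-1=8$ divides $|G|=216$, $q=2$ is a generalized Zsigmondy prime divisor of $8$, $Q=Q_8$, $Z=\langle -1\rangle$. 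The $Q$-invariant section $K/N\cong W$ is $Z$-irreducible (it has order $3$ and $Z$ acts trivially), yet $Q$ neither centralizes it nor acts faithfully on it, and its order is $3\ne p^a$. So the very configuration you proposed to exclude by "comparing $C_{K/K'}(Z)$ with $C_{K/K'}(Q)$" genuinely occurs under the stated hypotheses. The paper's own proof slides over exactly this point: the sentence "when $Q$ does not centralize $Y/X$, we have $\Phi(K)\le X<Y\le[K,Z]$" tacitly assumes that if $Z$ centralizes $Y/X$ then so does $Q$, which is what fails above. Your instinct that the Mersenne case is the obstruction was therefore exactly right, but the remedy is not a cleverer argument; one must either read "irreducible under the action of $Z$" as requiring $Z$ to act nontrivially, or strengthen the hypotheses (for instance by demanding, as in the paper's applications, that $N$ be the unique minimal normal subgroup of $G$, which blocks the extra summand $W$).
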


\begin{proof}
Let $C = C_K (Z)$.  Since $Z$ acts Frobeniusly on $N$, we have $C \cap N = 1$, and so, $C < K$.  If $C\Phi (K) = K$, then the Frattini subgroup argument would imply that $C = K$, so we have $C \Phi (K) < K$.  By Fitting's theorem, $K/\Phi (K) = C\Phi (K)/\Phi (K) \times [K,Z]/\Phi (K)$.  In particular, when $Q$ does not centralize $Y/X$, we have $\Phi (K) \le X < Y \le [K,Z]$.  We see that $Q$ acts Frobeniusly on $[K,Z]/\Phi (K)$, and so, $Q$ acts faithfully on $Y/X$.  Since $p^a$ is the smallest nontrivial power of $p$ so that $|Q|$ divides $p^a - 1$ It follows that $p^a$ divides $|Y:X|$, and so, $p^a$ divides $|K:K'|$.  Notice that the remaining conclusions follow.
\end{proof}

We will prove several of our results in the slightly more general context of a Gagola group.  We now prove a lemma regarding Gagola groups that applies to those that are not $p$-closed.  In the second author's recent paper, it is proved that if $N$ is the unique minimal normal subgroup that is a $p$-group, then $N < O_p (G)$.  We note that a similar lemma is proved in \cite{nonsolv}.

\begin{lem} \label{cent of N}
Suppose $G$ is a Gagola group that is not a $p$-group with unique minimal normal subgroup $N$ that is a $p$-subgroup.  Let $K = O_p (G)$.  Then the following are true:
\begin{enumerate}
    \item $N \le Z(K)$ and $K = C_G (N)$.
    \item If $K$ is not abelian, then $N \le K'$.
    \item If $G$ is $p$-closed, then $N = Z(K)$.
\end{enumerate}  
\end{lem}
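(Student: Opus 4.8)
The plan is to prove the three assertions about a Gagola group $G$ that is not a $p$-group, with unique minimal normal subgroup $N$ (a $p$-group) and $K = O_p(G)$. I would begin by recalling the structural facts about Gagola groups quoted in the Background section: since $G$ is not a $p$-group, we have $N < O_p(G) = K$, and for any nonprincipal $\lambda \in \Irr(N)$ there is a Sylow $p$-subgroup $P$ that is exactly the stabilizer of $\lambda$ in $G$, with $\lambda$ fully-ramified with respect to $P/N$. Crucially, the Background also records that whenever $H$ is a $p'$-subgroup of $G$, the product $HN$ is a Frobenius group. These are the tools I expect to use throughout.

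**Part (1): $N \le Z(K)$ and $K = C_G(N)$.**

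First I would show $N \le Z(K)$. Since $N$ is a minimal normal subgroup and $K = O_p(G) \trianglelefteq G$, the subgroup $C_K(N)$ is normal in $G$; because $N$ is an abelian normal $p$-subgroup contained in the $p$-group $K$, the center of $K$ meets $N$ nontrivially, and minimality of $N$ forces $N \le Z(K)$ (a normal subgroup of $G$ inside $N$ is either trivial or all of $N$). For $K = C_G(N)$: one inclusion $K \le C_G(N)$ is immediate from $N \le Z(K)$. For the reverse, set $C = C_G(N)$, a normal subgroup of $G$. I would argue that $C$ is a $p$-group: if $C$ had a nontrivial $p'$-element, it would lie in some $p'$-subgroup $H$ with $H$ centralizing $N$, contradicting that $HN$ must be a Frobenius group (in a Frobenius group $HN$ with kernel $N$, the complement $H$ acts fixed-point-freely, so no nonidentity element of $H$ centralizes $N$). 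Hence $C$ is a normal $p$-subgroup of $G$, so $C \le O_p(G) = K$, giving equality.

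**Parts (2) and (3), and the main obstacle.**

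For part (2), assume $K$ is nonabelian and suppose for contradiction that $N \not\le K'$. Since $N$ is minimal normal, $N \cap K' $ is trivial, so $N K'/K' \cong N$ is a nontrivial normal subgroup of $G/K'$. The idea is to exploit the fully-ramified structure: the character $\lambda \in \Irr(N)$ extends or behaves compatibly across $K/N$, and the hypothesis $N \cap K' = 1$ would let $N$ split off in a way incompatible with $\lambda$ being fully ramified with respect to $P/N$ inside the nonabelian $K$. Concretely, I would look at how $Z(K)$ relates to $K'$: in a Gagola group the relevant irreducible character is fully ramified, which typically forces $N \le K'$ whenever $K$ is nonabelian, and I expect the cleanest route is to show that if $N \not\le K'$ then characters of $N$ extend to $K$, contradicting full ramification. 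For part (3), when $G$ is $p$-closed we have $K = P$ a Sylow $p$-subgroup, and Theorem 2.5 of \cite{gagola} (invoked already in the proof of Theorem \ref{pclosed}) gives $N = Z(P) = Z(K)$ directly. The main obstacle will be part (2): translating the full-ramification condition into the statement $N \le K'$ requires care, since I must rule out $N$ having a complement modulo $K'$ and connect the commutator structure of $K$ to the vanishing/ramification behavior of $\lambda$; I would lean on the fact that full ramification of $\lambda$ with respect to $P/N$ forces nontrivial commutators landing in $N$, placing $N$ inside $K'$.
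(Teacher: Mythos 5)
Your part (1) is correct and is essentially the paper's own argument: $N \cap Z(K)$ is nontrivial (a nontrivial normal subgroup of the $p$-group $K$ meets its center) and is normal in $G$, so minimality of $N$ gives $N \le Z(K)$; then $C_G(N)$ is normal in $G$, and if it properly contained $K$ it would contain a nontrivial $p'$-subgroup $Q$ centralizing $N$, contradicting Gagola's result that $NQ$ is a Frobenius group. Your part (3) is also acceptable: the paper cites Lemma 4.2 of \cite{chma} rather than Theorem 2.5 of \cite{gagola}, but the latter is exactly what the paper invokes in the proof of Theorem \ref{pclosed}, so your substitution is harmless.

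The genuine gap is part (2). You never actually prove it: what you write is a plan (``I would look at\dots'', ``I expect the cleanest route\dots'') built on full ramification, and that plan fails as stated. Gagola's full-ramification statement is for $\lambda \in \Irr(N)$ with respect to $P/N$, where $P$ is a Sylow $p$-subgroup; when $G$ is not $p$-closed, $K = O_p(G)$ is properly contained in $P$, and an extension of $\lambda$ to the intermediate subgroup $K$ contradicts nothing. For instance, a nonprincipal character of the center of an extraspecial group $P$ of order $p^3$ is fully ramified with respect to $P/Z(P)$, yet it extends to every abelian subgroup of order $p^2$ containing $Z(P)$. So even if you showed that $N \cap K' = 1$ forces $\lambda$ to extend to $K$, no contradiction with full ramification relative to $P/N$ would follow. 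What you are missing is the one-line argument the paper uses, which needs no character theory at all: $K'$ is characteristic in $K$ and $K$ is normal in $G$, so $K'$ is normal in $G$; since $K$ is nonabelian, $K' > 1$; and since $N$ is the \emph{unique} minimal normal subgroup of $G$, every nontrivial normal subgroup of $G$ contains $N$, whence $N \le K'$. You invoke minimality of $N$ (to get $N \cap K' = 1$) but never use uniqueness, which is precisely the hypothesis that makes part (2) immediate.
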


\begin{proof}
We know that $N > 1$ is a normal subgroup of the $p$-group $K$, so $N \cap Z(K) > 1$.  Since $N$ is a minimal normal subgroup of $G$, this implies that $N = N \cap Z(K)$, and we have $N \le Z (K)$.  Thus, $K \le C_G (N)$.  Since $N$ is normal in $G$, we see that $C_G (N)$ is normal in $G$.  If $C_G (N) > K$, then there is a prime $q \ne p$ so that $q$ divides $|C_G (N)|$.   However, if $Q$ is a Sylow $q$-subgroup of $C_G (N)$, then $NQ$ is a Frobenius group which is a contradiction.  Thus, $C_G (N) = K$.  If $K$ is not abelian, then $K' > 1$.  Since $K'$ is characteristic in $G$, we see that $K'$ is normal in $G$.  Because $N$ is the unique minimal normal subgroup of $G$, we conclude that $N \le K'$.  By Lemma 4.2 of \cite{chma}, we know if $G$ is $p$-closed, then $N = Z(K)$.
\end{proof}

Next we are going to show that when $G$ is a Gagola group that is not a $p$-group but whose unique minimal normal subgroup $N$ is a $p$-subgroup, then we have a lower bound on $|O_p (G):O_p (G)'|$ in terms of $|N|$.  Note that this will apply when $|G| = e^4 - e^3$ and in this case, we also obtain an upper bound on the nilpotence class of $O_p (G)$.  

\begin{lem} \label{abelianization}
Let $p$ be a prime and let $a \ge 1$ be an integer.  Suppose $G$ is a Gagola group that is not a $p$-group whose unique minimal normal subgroup has order $p^a$.  Then $|O_p (G):O_p'(G)|$ is divisible by $p^a$.  If $G$ is an Isaacs group of degree $p^a$ and is $p$-closed, then $O_p (G)$ has nilpotence class between $2$ and $a+2$.
\end{lem}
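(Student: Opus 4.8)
The plan is to treat the two assertions separately, using the earlier lemmas as black boxes. For the divisibility claim, write $K = O_p(G)$ and let $N$ be the unique minimal normal subgroup, so $|N| = p^a$. I would verify that $(G,N)$ satisfies the hypotheses of Lemma \ref{prime divisors}: from Lemma \ref{cent of N} we have $N \le Z(K)$; from the background on Gagola groups, $N < O_p(G) = K$ and every $p'$-subgroup $H$ gives a Frobenius group $NH$; and since $|G:P| = p^a - 1$ for a Sylow $p$-subgroup $P$, we get that $p^a - 1$ divides $|G|$. Conclusion (2) of Lemma \ref{prime divisors} then yields immediately that $p^a$ divides $|K:K'| = |O_p(G):O_p(G)'|$, which is the first assertion.

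For the bound on the nilpotence class I would specialize to the $p$-closed Isaacs case. Here Theorem \ref{pclosed} gives that $P = O_p(G) = K$ is the normal Sylow $p$-subgroup with $|P| = p^{3a}$, that $N = Z(P)$ with $|N| = p^a$ and $|P:N| = p^{2a}$, and that $(P,Z(P))$ is a Camina pair. The lower bound is immediate: since $|P:Z(P)| = p^{2a} > 1$, $P$ is nonabelian, so its class $c$ is at least $2$. For the upper bound, I would write the lower central series $P = \gamma_1 > \gamma_2 > \cdots > \gamma_c > \gamma_{c+1} = 1$ and count its factors against $|P/N| = p^{2a}$, using the key structural fact that $\gamma_c = N$ together with the divisibility $p^a \mid |P:P'|$ just proved. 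Concretely, once $\gamma_c = N$ is known, the factors $\gamma_2/\gamma_3, \ldots, \gamma_{c-1}/\gamma_c$ account for $|P'/N| = p^{2a}/|P:P'| \le p^a$, and as these are $c-2$ nontrivial $p$-group factors we obtain $p^{c-2} \le p^a$, that is, $c \le a+2$.

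The crux, and the step I expect to be the main obstacle, is proving $\gamma_c = N$. Since $N = Z(P)$ we automatically have $\gamma_c \le Z(P) = N$, so the content is the reverse inclusion. For this I would exploit the Camina condition directly: choose $x \in \gamma_{c-1} \setminus N$, which exists because $\gamma_{c-1} \le Z(P)$ would force $\gamma_c = 1$. As $x \in P \setminus N$, the Camina hypothesis gives $xN \subseteq x^P = x\{[x,g] : g \in P\}$, whence $N$ is contained in the set of commutators $\{[x,g] : g \in P\}$; but every such commutator lies in $[\gamma_{c-1},P] = \gamma_c$, so $N \le \gamma_c$. This coset-to-commutator translation is the delicate point, and it is exactly where the full strength of the Camina pair (rather than merely $N \le Z(P)$) is used. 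With $\gamma_c = N$ in hand the counting above closes the argument, and I would finally sanity-check the small cases — in particular $a = 1$, where the bound reads $2 \le c \le 3$ — for consistency with Theorem \ref{intro 4}.
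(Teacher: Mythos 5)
Your proposal is correct and follows essentially the same route as the paper's proof: the divisibility claim comes from verifying the hypotheses of Lemma \ref{prime divisors} with a generalized Zsigmondy prime divisor of $p^a-1$, and the class bound comes from the same count of lower-central-series factors between $P'$ and $Z(P)=N$ against $|P':N| = p^{2a}/|P:P'| \le p^a$, with the nonabelian observation giving the lower bound of $2$. The one point where you go beyond the paper is in explicitly proving the step it leaves implicit, namely that the last nontrivial term $\gamma_c$ of the lower central series equals $N$ (you derive $N \le \gamma_c$ from the Camina condition; one could equally note that $\gamma_c$ is a nontrivial normal subgroup of $G$ and hence contains the unique minimal normal subgroup $N$), and identifying and closing that gap is a genuine improvement in rigor rather than a different method.
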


\begin{proof}
Let $K = O_p (G)$.  As we mentioned in Section \ref{background}, Gagola shows in \cite{gagola} that $p^a - 1$ divides $|G:K|$.  
We know that $p^a - 1$ is divisible by a generalized Zsigmondy prime divisor.  Thus, $G$ satisfies the hypotheses of Lemma \ref{prime divisors}, and so, $p^a$ divides $|K:K'|$.  


Now, assume that $|G|= e^4 -e^3$ and $G$ is $p$-closed.  We saw in Section \ref{background} that since $K$ is the Sylow $p$-subgroup of $G$, we have that $(K,N)$ is a Camina pair where $N$ is the unique minimal normal subgroup of $G$, and so $K$ is not abelian.  Also, as mentioned in Section \ref{background} it is shown in \cite{gagola} that $N = Z(K)$, and then using Lemma \ref{cent of N}, we have $N = Z(K) \le K'$.  Once more, we use the result mentioned in Section \ref{background} to see that $|K:Z(K)| = p^{2a}$.  We have shown in Lemma \ref{index ptoa} that $|K':Z(K)| \ge p^a$, so $|K':Z(K)| \le p^a$.  Thus, the upper or lower central series for $P$ can have at most $a$ terms between $P'$ and $Z(P)$, and we obtain the upper bound on the nilpotence class of $a+2$.  Since $P$ is nonabelian, the lower bound on the nilpotence class is $2$.
\end{proof}

For this next lemma, we consider groups that satisfy a slightly weaker hypothesis.  We will see that Gagola groups do satisfy this hypothesis.  Groups that satisfy the hypothesis of this next lemma where every Sylow $q$-subgroup is cyclic or generalized quaternion for $q \ne p$ are called $p'$-semiregular and have been studied in \cite{fleisch}.  We observe that for $p'$-semiregular groups, if a Sylow $p$-subgroup is larger than its index, then the group must have a nontrivial normal $p$-subgroup.  We note that we need some hypotheses for this lemma to be true since $(S_3) {\rm \, \wr \,} Z_3$ has a Sylow $3$-subgroup whose order is larger than its index and the Fitting subgroup of that group is the Sylow $2$-subgroup.  In fact, if $p$ is a Mersenne prime so that $p+1 = 2^n$, then $((Z_2)^n \rtimes Z_p) {\rm \, \wr \,} Z_p$ will have the property that a Sylow $p$-subgroup has order larger than its index and its Fitting subgroup is the Sylow $2$-subgroup.   

We will use $G^\infty$ to denote the {\it solvable residual of $G$}, i.e. the smallest normal subgroup of $G$ whose quotient is solvable, and we write $O (G)$ for $O_{2'} (G)$.

\begin{lem} \label{p prime index}
Let $p$ be a prime.  Assume $G$ is a group so that every Sylow $q$-subgroup for $q \ne p$ is either cyclic or generalized quaternion.  Let $P$ be a Sylow $p$-subgroup of $G$.  If $|P| > |G:P|$, then $O_p (G) > 1$.
\end{lem}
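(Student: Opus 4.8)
The plan is to argue by contradiction: assume $O_p(G) = 1$ and deduce that $|P| \le |G:P|$, which contradicts the hypothesis $|P| > |G:P|$. The key observation is that $|G:P|$ is exactly the $p'$-part of $|G|$, so the order of every $p'$-subgroup of $G$ divides $|G:P|$. Thus it suffices to locate a normal $p'$-subgroup $K$ on which $P$ acts faithfully and to prove that $|P|$ is bounded by $|K|$; since then $|P| \le |K| \le |G:P|$. The natural candidates are the Fitting subgroup $F(G)$ in the solvable case and the generalized Fitting subgroup $F^*(G) = F(G)E(G)$ in general, both of which are self-centralizing in the sense that $C_G(\cdot) = Z(\cdot)$. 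Because $O_p(G)=1$, the nilpotent group $F(G)$ has trivial Sylow $p$-subgroup, so $F(G) = O_{p'}(G)$ is a $p'$-group.

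I would treat the solvable case first, as it is the clean model. When $G$ is solvable, $C_G(F(G)) \le F(G)$, so $C_P(F(G)) = P \cap C_G(F(G)) = 1$ and $P$ embeds in $\Aut(F(G))$. Writing $F(G) = \prod_{q \ne p} K_q$ as the product of its Sylow subgroups, each cyclic or (for $q=2$) generalized quaternion, we have $\Aut(F(G)) = \prod_q \Aut(K_q)$, so it is enough to check that the $p$-part of $|\Aut(K_q)|$ is at most $|K_q|$ for each $q$. For odd $q$ with $K_q$ cyclic of order $q^{a}$, one has $|\Aut(K_q)| = q^{a-1}(q-1)$, whose $p$-part divides $q-1 < |K_q|$; for $q=2$ the automorphism group of a cyclic group, and of a generalized quaternion group of order at least $16$, is a $2$-group (contributing nothing to the $p$-part), while $\Aut(Q_8) \cong S_4$ contributes at most a factor $3 \le 8$. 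Multiplying, $|P| \le |\Aut(F(G))|_p \le |F(G)| \le |G:P|$, the desired contradiction.

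For nonsolvable $G$ the same skeleton applies with $F^*(G)$ in place of $F(G)$, but now the components (the quasisimple subnormal subgroups) must be analyzed rather than dismissed. Each component $L$ is subnormal, so for $q \ne p$ its Sylow $q$-subgroups are again cyclic or generalized quaternion. It is tempting to conclude $E(G)=1$, but this is false: for odd $p$ the group ${\rm SL}_2(q)$ (with $q$ odd) has generalized quaternion Sylow $2$-subgroups and cyclic odd Sylow subgroups away from $q$, and so can genuinely occur as a component. The right framework is to pass to $G/O(G)$, where $O(G) = O_{2'}(G)$, so that the Brauer--Suzuki theorem becomes available to control the $2$-local structure, and to use the solvable residual $G^\infty$ to isolate the semisimple part; one then invokes the classification of quasisimple groups all of whose Sylow $q$-subgroups ($q \ne p$) are cyclic or generalized quaternion (notably the family ${\rm SL}_2(q)$) and checks in each case that the $p$-part of the order is dominated by the $p'$-part, so that the balance $|P| \le |G:P|$ survives the presence of components.

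I expect the main obstacle to be precisely this last step: identifying the possible nonabelian composition factors under the cyclic/generalized-quaternion hypothesis and verifying the order comparison for each. The solvable case is elementary once the automorphism-order estimate is in hand, and Burnside's normal $p$-complement theorem together with Brauer--Suzuki handles the $2$-local obstructions, but controlling how the quasisimple pieces contribute to both $|P|$ and $|G:P|$ — and ensuring the faithful-action bound on $F^*(G)$ still yields $|P| \le |G:P|$ after the components are accounted for — is where the real content of the lemma lies.
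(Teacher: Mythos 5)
Your solvable case is complete and is essentially the paper's argument: assume $O_p(G)=1$, note $F(G)$ is then a normal $p'$-group which is cyclic-or-generalized-quaternion in each Sylow, bound $|{\rm Aut}(O_q(G))|_p < |O_q(G)|$ prime by prime (including the $Q_8$/$S_4$ exception), and use $C_G(F(G)) \le F(G)$ to embed $P$ into ${\rm Aut}(F(G))$, forcing $|P| \le |G:P|$. That half needs no changes.

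The genuine gap is the nonsolvable case, and you have in effect flagged it yourself: you write that one ``invokes the classification of quasisimple groups all of whose Sylow $q$-subgroups ($q \ne p$) are cyclic or generalized quaternion'' and then ``checks in each case'' the order comparison, but you neither prove such a classification nor cite one, and this is exactly where you say the real content lies. As it stands, the proposal is a strategy, not a proof: nothing in your argument rules out a hypothetical quasisimple group satisfying the Sylow hypothesis whose order has $p$-part exceeding its $p'$-part, and the faithful-action bound on $F^*(G)$ does not by itself control this. The paper closes precisely this hole by citing Theorem 6.1 of \cite{fleisch} on finite $p'$-semiregular groups, which supplies the explicit list: for odd $p$, the solvable residual $G^\infty = (O^p(G))^\infty$ is ${\rm SL}(2,p^a)$, or ${\rm SL}(2,r)$ for a short list of primes $r$ when $p \in \{3,5\}$; for $p=2$ it is ${\rm SL}(2,2^n)$, ${\rm Sz}(2^{2m+1})$, or their direct product. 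The paper then verifies $|G^\infty|_p < |G^\infty|_{p'}$ for each entry, controls the layer above $G^\infty$ via $|{\rm Out}(G^\infty)|_p$, and applies the solvable paragraph to the relevant quotient --- a decomposition along $G^\infty$ rather than along $F^*(G)$, which avoids having to analyze how $p$-elements act on components. (Your instincts about ${\rm SL}_2(q)$ occurring, and about Brauer--Suzuki being relevant, are sound, but that machinery lives inside the proof of the cited classification; it does not substitute for having the classification in hand.) To repair your write-up, either quote Theorem 6.1 of \cite{fleisch} explicitly and do the case-by-case order check, or supply an independent proof of the classification --- the latter being a substantial theorem, not a routine verification.
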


\begin{proof}
We suppose that $O_p (G) = 1$, and prove the contrapositive.  Let $q$ be a prime so that $O_q (G) > 1$.  We know that $q \ne p$ and $O_q (G)$ is cyclic or generalized quaternion.  Suppose first $q$ is odd.  If $q$ is not congruent to $1$ modulo $p$, then $p$ does not divide $|{\rm Aut} (O_q (G))|$.  On the other hand, if $q$ is odd and $q$ is congruent to $1$ modulo $p$, then $|{\rm Aut} (O_q (G))|_p = p^b$ where $p^b$ is the full $p$-power of $q-1$.  If $q = 2$ and $O_2 (G)$ is either cyclic or generalized quaternion but not quaternion, then ${\rm Aut] (O_2 (G))}$ is a $2$-group, and in this case, $p$ will be odd and so $|{\rm Aut} (O_2 (G))|_p = 1$.  The remaining possibility is that $q = 2$ and ${\rm O}_2 (G)$ is the quaternion group of order $8$ and ${\rm Aut} (O_2 (G)) \cong {\rm SL} (2,3)$.  We see that if $p = 3$, then $|{\rm Aut} (O_2 (G)|_3 = 3$ and $|{\rm Aut} (G)|_p = 1$ for all other odd $p$.  In particular, for every prime $q$, we have $|{\rm Aut} (O_q (G))|_p < |O_q (G)|$.  

Let $F(G) = \prod_{i=1}^n O_{q_i} (G)$ for primes $q_1, \dots, q_n$.  Then 
$$G/C_G (F(G)) = G/(\cap_{i=1}^n C_G(O_{q_i} (G))) \le \prod_{1=1}^n G/C_G (O_{q_i} (G))$$ 
and $\prod_{i=1}^n G/C_G(O_{q_i} (G))$ is isomorphic to a subgroup of $\prod_{i=1}^n {\rm Aut} (O_{q_i})$.  Thus, 
$$|G:C_G(F(G))|_p \le prod_{i=1}^n |{\rm Aut} (O_{q_i})|_p < \prod_{i=1}^n |O_{q_i} (G)| = |F(G)|.$$   
If $G$ is solvable, then $C(F(G)) \le F(G)$.  So 
$$|G|_p = |G:F|_p \le |G:C_G(F(G))|_p < |F(G)|,$$ 
and this gives the result.

If $G$ is nonsolvable, then we are going to use Theorem 6.1 of \cite{fleisch}.  Note that Theorem 6.1 of \cite{fleisch} addresses $O^p (G)$.   If $p$ is odd, then Theorem 6.1 (I) of \cite{fleisch} implies that $G^\infty = (O^p (G))^\infty$ is either ${\rm SL} (2,q)$ where $q = p^a$ for some integer $a$ or $p = 3$ or $5$ and $G^\infty$ is ${\rm SL} (2,r)$ for a certain list of primes $r$.  Observe that if $G^\infty \cong {\rm SL} (2,p^a)$, then $p^a < p^a + 1$ and $a_p < p^a-1$.  On the other hand, if $p$ is $3$ or $5$, then $|G^\infty|_p < |G^\infty|_{p'}$.  We see that $|F (G/G^\infty)|_p$ will divide $|{\rm Out} (G^\infty)|_p$ which is $a_p$ when $G^\infty  = {\rm PSL} (2,p^a)$ and is $1$ when $G^\infty = {\rm PSL} (2,r)$.  If $P/G^\infty = O^p (G/G^\infty)$, then we can apply the first paragraph to $G/P$ to see that $|G:P|_p < |G:P|_{p'}$.  Since $|G^\infty| = p^a (p^a+1)(p^a-1)$, we have the result. 

The remaining case is that $p = 2$.  By Theorem 6.1(ii) of \cite{fleisch}, we have that $G^\infty$ is either ${\rm SL} (2,2^n)$, ${\rm Sz} (2^{2m+1})$, or ${\rm SL} (2,2^n) \times {\rm Sz} (2^{2m+1})$ where $n$ and $2m+1$ are relatively prime.  The proof in this case is similar to the proof in the case when $p$ is odd.
\end{proof}

We now use Lemma \ref{p prime index} to show that when $G$ is an Isaacs group of degree $p^a$ that is not $p$-closed, we have $p^{2a} < |O_p (G)| < p^{3a}$. 

\begin{lem} \label{index ptoa}
Let $p$ be a prime and $a \ge 1$ be an integer.  If $G$ is an Isaacs group of degree $p^a$, $N$ is the unique minimal normal subgroup of $G$, and $K = O_p (G)$, then $|K:N| > p^a$.
\end{lem}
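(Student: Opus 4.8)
The plan is to pass to the quotient $\overline{G} = G/K$ and apply the contrapositive of Lemma~\ref{p prime index}. First I would record the arithmetic already in hand. Since $G$ is an Isaacs group of degree $e = p^a$, we have $|G| = e^4 - e^3 = p^{3a}(p^a - 1)$, and since $|G:P| = e - 1 = p^a - 1$ for a Sylow $p$-subgroup $P$, this forces $|P| = p^{3a}$. The unique minimal normal subgroup $N$ has order $p^a$ and lies in $K = O_p(G)$ (indeed Gagola's result gives $N < K$), so the claim $|K:N| > p^a$ is equivalent to $|K| > p^{2a}$, and that is the inequality I will establish.

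Next I would check that $G$ satisfies the hypothesis of Lemma~\ref{p prime index}, namely that every Sylow $q$-subgroup $Q$ with $q \ne p$ is cyclic or generalized quaternion. This is exactly where the Gagola structure is used: because $Q$ is a $p'$-subgroup, $NQ$ is a Frobenius group (as recalled in Section~\ref{background}), so $Q$ is a Frobenius complement and hence has all of its Sylow subgroups cyclic or generalized quaternion; being itself a $q$-group, $Q$ is therefore cyclic or generalized quaternion. Since $K$ is a $p$-group, for each $q \ne p$ a Sylow $q$-subgroup of $\overline{G} = G/K$ is isomorphic to a Sylow $q$-subgroup of $G$, so $\overline{G}$ inherits the same hypothesis.

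Then I would invoke the standard fact that $O_p(\overline{G}) = O_p(G/O_p(G)) = 1$. Writing $\overline{P} = P/K$ for a Sylow $p$-subgroup of $\overline{G}$, the contrapositive of Lemma~\ref{p prime index} applied to $\overline{G}$ gives $|\overline{P}| \le |\overline{G} : \overline{P}|$. Since $K \le P$, we have $|\overline{G} : \overline{P}| = |G:P| = p^a - 1$ and $|\overline{P}| = |P|/|K| = p^{3a}/|K|$, so $p^{3a}/|K| \le p^a - 1 < p^a$. Rearranging yields $|K| > p^{2a}$, that is, $|K:N| > p^a$, as wanted. (This argument is uniform: when $G$ is $p$-closed, $\overline{P} = 1$ and the inequality holds trivially, recovering $|K| = p^{3a}$.)

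The main obstacle is making the reduction to $\overline{G}$ airtight rather than the final estimate, which is immediate. One must confirm both that passing to $G/K$ preserves the Sylow-structure hypothesis of Lemma~\ref{p prime index} (which holds precisely because $K$ is a $p$-group, so the relevant Sylow subgroups survive unchanged in the quotient) and that it annihilates the normal $p$-core, so that the contrapositive of the lemma supplies a genuine index inequality. Once those two points are secured, the computation with $|\overline{P}| = p^{3a}/|K|$ delivers the bound.
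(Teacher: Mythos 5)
Your proposal is correct and takes essentially the same route as the paper's own proof: both apply Lemma~\ref{p prime index} to the quotient $G/K$, where $O_p(G/K)=1$, and use the arithmetic $|P:K| \ge p^a > p^a - 1 = |G:P|$ (equivalently, your rearrangement $p^{3a}/|K| \le p^a-1$) to conclude $|K| > p^{2a}$. The only differences are cosmetic: you argue via the contrapositive where the paper argues by contradiction, and you make explicit the Frobenius-complement verification of the cyclic/generalized-quaternion Sylow hypothesis that the paper leaves implicit.
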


\begin{proof}
Suppose that $|K:N| \le p^a$.  Since $|K:N| < p^{2a}$, we see that $G$ is not $p$-closed.  This would imply that $|K| = |K:N||N| \le p^a p^a = p^{2a}$.  On the other hand, we have $|G| = e^4 - e^3 = e^3 (e-1) = (p^a)^3 (p^a -1)$, and so, $p^a (p^a -1)$ divides $|G:K|$.  Take $P$ to be a Sylow $p$-subgroup of $G$.  We have $|P:K| \ge p^a$ is larger than $|G:P| = p^{a-1}$, and by Lemma \ref{p prime index}, we have $O_p (G/K) > 1$ which is a contradiction.
\end{proof}

A corollary of Lemma \ref{index ptoa} is that when $G$ is an Isaacs group of degree $p^a$ that is not $p$-closed, $O_p (G)$ is not abelian.

\begin{cor} \label{Op nonabel}
Let $p$ be a prime and $a \ge 1$ be an integer.  If $G$ is an Isaacs group of degree $p^a$, $N$ is the unique minimal normal subgroup of $G$, and $K = O_p (G)$, then $K$ is nonabelian.    
\end{cor}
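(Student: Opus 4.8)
The plan is to argue by contradiction: I would assume $K = O_p(G)$ is abelian and produce an irreducible character of a Sylow $p$-subgroup whose degree is incompatible with that assumption. The two inputs I would lean on are that $K$ is a large normal subgroup of a Sylow $p$-subgroup $P$, and that $P$ already carries an irreducible character of degree $p^a$ coming from the Gagola structure.

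First I would pin down the arithmetic. From the facts collected at the start of the section, $|N| = p^a$ and $|P:N| = e^2 = p^{2a}$, so $|P| = p^{3a}$; since $K = O_p(G)$ is a normal $p$-subgroup it lies in $P$ with $K \trianglelefteq P$, and Lemma \ref{index ptoa} gives $|K| > p^{2a}$. Next I would invoke the fully-ramified description from Section \ref{background}: for nonprincipal $\lambda \in \Irr(N)$, the induced character $\lambda^P$ has a single irreducible constituent $\theta \in \Irr(P)$ with $\theta(1)^2 = |P:N|$, so $\theta(1) = p^a$.

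With $K$ assumed abelian, every constituent $\psi$ of $\theta_K$ would be linear, and I would feed this into Clifford theory for $K \trianglelefteq P$. Letting $I = I_P(\psi)$, the Clifford correspondence writes $\theta = \varphi^P$ for some $\varphi \in \Irr(I \mid \psi)$, so with $e = \varphi(1)$ we have $|P:I| = p^a/e$, while the degree identity $\sum_{\chi \in \Irr(I\mid \psi)} \chi(1)^2 = |I:K|$ (valid because $\psi$ is linear and $I$-invariant) gives $e^2 \le |I:K|$. Multiplying these would yield
\[
|P:K| = |P:I|\,|I:K| \ge \frac{p^a}{e}\cdot e^2 = p^a e,
\]
and hence $e \le |P:K|/p^a = p^{2a}/|K| < 1$, since $|K| > p^{2a}$. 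As $e = \varphi(1) \ge 1$, this is the desired contradiction.

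The step I expect to require the most care is the degree bookkeeping: making sure $\theta(1) = p^a$ is genuinely forced by the fully-ramified hypothesis and that the Clifford bound $e^2 \le |I:K|$ is applied with $\psi$ linear. A pleasant feature is that the same computation covers both cases at once --- when $G$ is $p$-closed one has $K = P$, where it collapses to the observation that an abelian $P$ cannot support the degree-$p^a$ character $\theta$; and since Lemma \ref{index ptoa} supplies $|K| > p^{2a}$ regardless of $p$-closure, no separate treatment of the non-$p$-closed case is needed.
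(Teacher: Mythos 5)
Your proposal is correct, and it reaches the contradiction by a genuinely different (if related) route from the paper. The paper's own proof is a two-line application of Ito's theorem to $G$ itself: if $K$ were abelian and normal, then the Gagola degree satisfies $d = e^2 - e \le |G:K|$, forcing $|K:N| \le e = p^a$, which contradicts Lemma \ref{index ptoa}. You instead work entirely inside the Sylow $p$-subgroup $P$, take the fully-ramified constituent $\theta \in \Irr(P)$ of degree $p^a$ over $N$, and re-derive an Ito-type bound over the abelian normal subgroup $K \trianglelefteq P$ by hand: the Clifford correspondence gives $|P:I| = p^a/\varphi(1)$, the identity $\sum_{\chi \in \Irr(I \mid \psi)} \chi(1)^2 = |I:K|$ gives $\varphi(1)^2 \le |I:K|$, and multiplying yields $|P:K| \ge p^a \varphi(1) \ge p^a$, against $|P:K| = p^{3a}/|K| < p^a$ from Lemma \ref{index ptoa}. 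Both arguments ultimately rest on the same lemma, so neither is more general in substance; what yours buys is self-containedness (no appeal to Ito's theorem, and no reference to the degree $e^2-e$ character of the whole group $G$), at the cost of more bookkeeping. In fact your argument can be shortened considerably: since $\psi$ is linear and $\theta$ is a constituent of $\psi^P$ by Frobenius reciprocity, one gets $p^a = \theta(1) \le \psi^P(1) = |P:K|$ in one step, with no need for the inertia group $I$ or the sum-of-squares identity. Your Clifford computations themselves are sound --- in particular the identity $\sum \chi(1)^2 = |I:K|$ is correctly justified by $\psi$ being linear and $I$-invariant --- so this is purely a matter of economy, not of correctness.
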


\begin{proof}
Suppose $K$ is abelian.  Then $|G:K| \ge d = e^2 - e$, and so, $|K:N| \le e = p^a$.  But this contradicts Lemma \ref{index ptoa}.  Thus, $K$ is not abelian.
\end{proof}

We now use a generalized Zsigmondy prime to obtain  more information about $O_p (G)$.

\begin{lem}\label{Zsigmondy}
Let $p$ be a prime and $a \ge 1$ be an integer.  Suppose that $G$ is an Isaacs group of degree $p^a$ and write $N$ for the unique minimal normal subgroup of $G$.  Suppose $q$ is a generalized Zsigmondy prime divisor of $p^a-1$, let $Q$ be a Sylow $q$-subgroup of $G$, and let $Z$ be the unique subgroup of order $q$ in $Q$.  Write $K = O_p (G)$ and set $C = C_K (Z) N$.
\begin{enumerate}
\item $C$ is normal in $K$ and $C_K (Z) \cap N = 1$.
\item $N \le K'$ and both $K'$ and $C/N$ are abelian.
\item One of the following occurs: 
\begin{enumerate}
    \item $C = N$, $|K| = p^{3a}$, $K$ is a Sylow $p$-subgroup of $G$, $KQ$ is a Frobenius group with Frobenius kernel $K$ and Frobenius complement $Q$, and either $K' = N = Z(K)$ and $|K:N| = p^{2a}$ so $K$ is semi-extraspecial or $N=[K',K] = Z(K)$, $K' = Z_2 (K)$, and $|K:K'| = |K':N| = p^a$.
    \item $K' \le C$, $|K:C| = p^a$, and $C_K (Z) > 1$.
\end{enumerate}
\end{enumerate}
\end{lem}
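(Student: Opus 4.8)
The plan is to study the coprime action of $Q$, and of its unique subgroup $Z$ of order $q$, on the normal $p$-subgroup $K$, with Lemma \ref{prime divisors} as the main engine. I would freely use that $N \le Z(K) \cap K'$ and $K = C_G(N)$ (Lemma \ref{cent of N}), that $K$ is nonabelian (Corollary \ref{Op nonabel}), that $p^{2a} < |K| \le p^{3a}$ (Lemma \ref{index ptoa}), and that $NH$ is Frobenius for every $p'$-subgroup $H$; in particular $NZ$ is Frobenius, so $Z$ acts fixed-point-freely on $N$. Part (1) is then immediate: $C_K(Z) \cap N = C_N(Z) = 1$, and because $N$ is central in $K$ and meets $C_K(Z)$ trivially, $C$ is an internal direct product $C = C_K(Z) \times N$, whence $C/N \cong C_K(Z)$. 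I would defer the normality of $C$ in $K$, since it drops out of the case analysis: in the first case below $C = N$ is central, and in the second $K' \le C$, so $C/K'$ is a subgroup of the abelian group $K/K'$ and is therefore normal.

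For part (2), the containment $N \le K'$ is Lemma \ref{cent of N}(2), so the substance is that $K'$—equivalently, via the direct product above, $C_K(Z)$—is abelian (and then $C/N \cong C_K(Z)$ is abelian as well). I would obtain this together with part (3): when $C_K(Z) = 1$ the structure theorem makes $K$ of class $2$ or $3$, so $[K',K'] \le \gamma_4(K) = 1$; when $C_K(Z) > 1$ one shows $C_K(Z)$ is abelian, and since $K' \le C = C_K(Z) \times N$, so is $K'$. As input I would record the Fitting decomposition $K/\Phi(K) = \overline{C_K(Z)} \times \overline{[K,Z]}$ from the proof of Lemma \ref{prime divisors}, the containment $N = [N,Z] \le [K,Z]$, and the bound $|K:K'| \ge p^a$ of Lemma \ref{abelianization}, which gives $|K'| \le p^{2a}$.

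Part (3) rests on one arithmetic fact: writing $|Q| = q^{n'}$, the generalized Zsigmondy hypothesis says $|Q|$ divides $p^a - 1$ but no $p^b - 1$ with $1 \le b < a$, which is equivalent to saying that the multiplicative order of $p$ modulo $|Q|$ is exactly $a$; hence $|Q|$ divides $p^m - 1$ if and only if $a$ divides $m$. Now split on $C_K(Z)$. If $C_K(Z) = 1$, then $C = N$ and $Z$—hence $Q$, since every nontrivial element of $Q$ powers into $Z$—acts fixed-point-freely on $K$, so $KQ$ is Frobenius with kernel $K$. Frobenius gives $|Q| \mid |K| - 1 = p^m - 1$, so $a \mid m$; with $2a < m \le 3a$ this forces $m = 3a$, i.e. $|K| = p^{3a}$ and $K$ Sylow, whence $G$ is $p$-closed and $N = Z(K)$ by Lemma \ref{cent of N}(3). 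Since $C = N$ also makes $Z$ fixed-point-free on $K/N$, Lemma \ref{prime divisors} limits $|K:K'|$ to $p^a$ or $p^{2a}$: the value $p^{2a}$ gives $K' = N = Z(K)$, so $K$ has class $2$ and, being a Camina pair $(K,N)$ with $|K:N| = p^{2a}$, is semi-extraspecial; the value $p^a$ gives $|K:K'| = |K':N| = p^a$, and nilpotence yields $[K',K] = N = Z(K)$ and $K' = Z_2(K)$. If instead $C_K(Z) > 1$, I would show $[K',Z] = N$—equivalently $K' \le C$—using that $K'$ is abelian, so $[K',Z]$ carries a fixed-point-free $Q$-action, forcing $a \mid \log_p|[K',Z]|$, and then excluding $[K',Z] = K'$ (which would return us to the $p$-closed case) via the order bound. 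Finally $|K:C| = p^a$ follows because $C$ is normal with $Z$ fixed-point-free on $K/C$, so each $Z$-irreducible factor of $K/C$ has order $p^a$ by Lemma \ref{prime divisors}, while $|K| \le p^{3a}$ and $|C| = |C_K(Z)|\,p^a \ge p^{a+1}$ leave room for exactly one such factor.

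The main obstacle is controlling the $Q$-action on sections that do \emph{not} lie above $K'$, since Lemma \ref{prime divisors} constrains only factors $Y/X$ with $K' \le X$. This is precisely what is needed to show $K'$ and $C_K(Z)$ are abelian and to pin $[K',Z]$ down to $N$ in the non-$p$-closed case. My intended resolution is to first secure the abelianness of $K'$, so that ordinary $\mathbb{F}_pQ$-module theory applies inside $K'$, and then to play the fixed-point-free action and the resulting divisibilities against the global order identity $|G| = p^{3a}(p^a - 1)$, which is what ultimately separates the $p$-closed case $|K| = p^{3a}$ from the non-$p$-closed case $|K| < p^{3a}$.
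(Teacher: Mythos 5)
Your part (1) and your handling of the case $C = N$ are sound, and there you take a genuinely different route from the paper: you obtain $|K| = p^{3a}$ from the Frobenius divisibility $|Q| \mid |K| - 1$ together with the observation that the multiplicative order of $p$ modulo $|Q|$ is exactly $a$, so that $a \mid m$ and $2a < m \le 3a$ force $m = 3a$; the paper instead extracts this from the section analysis of $K/K'$ and $K'/N$. Your arithmetic trick is a clean way around the restriction of Lemma~\ref{prime divisors} to sections above $K'$, since when $C_K(Z) = 1$ the action of $Q$ is fixed-point-free on every section of $K$ and Frobenius divisibility applies everywhere.

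The genuine gap is in the case $C_K(Z) > 1$, which is exactly part (3)(b) and the abelianness assertions of part (2). Your plan there is circular: you propose to prove $K' \le C$ (equivalently $[K',Z] = N$) ``using that $K'$ is abelian,'' but your only route to the abelianness of $K'$ is $K' \le C = C_K(Z) \times N$ together with the assertion that $C_K(Z)$ is abelian, for which no argument is ever offered. You correctly flag this as the main obstacle, but your stated resolution---``first secure the abelianness of $K'$''---restates the problem rather than solving it. The paper breaks the circle in the opposite order, using two devices your proposal lacks. First, it proves $K' \le C$ with no abelianness input: Fitting's theorem applied to the coprime action of $Q$ on $K/K'$ (a section legitimately above $K'$) gives $K/K' = [K,Q]K'/K' \times D/K'$ with $D/K' = C_{K/K'}(Q)$; the factor $K/D$ has no trivial $Q$-constituents, so by Lemma~\ref{prime divisors} each of its irreducible $Q$-factors has order $p^a$, and $|K:D| \ge p^{2a}$ would force $D = N$, contradicting $C > N$; hence $|K:D| = p^a$, and since fixed points in coprime quotients lift, $D = C_K(Q)N = C$, giving $K' \le C$ and the normality of $C$ at once. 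Second, only after this does the paper prove abelianness, by an argument resting on the uniqueness of the minimal normal subgroup: if $K'' > 1$, then $K''$ is normal in $G$, so $N \le K''$; since $Q$ centralizes $K'/K''$ (a section of $C/N$), coprimality gives $K' = C_{K'}(Q)K'' = C_{K'}(Q)\Phi(K')$, whence $K' = C_{K'}(Q)$ by the Frattini property, contradicting $N \le K'$ and $C_N(Q) = 1$; the same pattern shows $C/N$ is abelian. Without these two steps---Fitting on $K/K'$ to get $K' \le C$ first, and the uniqueness-of-$N$ plus Frattini argument for abelianness afterwards---parts (2) and (3)(b) of your proposal remain unproven.
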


\begin{proof} 
	We begin by noting in general that $|K| \le p^{3a}$, where quality holds if and only if $G$ is $p$-closed.  It is also the case that $|N| = p^a$ and by Lemma \ref{prime divisors} that $|K:K'|$ is divisible by $p^a$ with $Q$ acting nontrivially on $K/K'$. In the case that $|K| = p^{3a}$, Lemma \ref{cent of N} gives us that $N = Z(K)$.  Otherwise, $|K| < p^{3a}$ and $N \le Z (K)$.  Also note that by Corollary \ref{Op nonabel}, $K$ is known to be nonabelian, and therefore $N \subseteq K'$.

	Next, we show $K/N$ cannot be irreducible.  If $K/N$ is irreducible, then since $N \le K'$ $K$ and $K' < K$, we conclude that $N = K'$. Because $Q$ acts nontrivially on $K/K'$ and $K/N$ is irreducible, it follows by Lemma \ref{prime divisors} that $|K:N| = p^a$. As $|N| = p^a$, we have $|K| = p^{2a}$, which contradicts Lemma \ref{index ptoa}.  Hence, $K/N$ is not irreducible, and in fact, we cannot have $|K:N| \ne p^a$.  
	
	We proceed in cases.  First, assume that $C = N$.  Since $NQ$ is a Frobenius group, we know $C_K(Z) \cap N = 1$, and therefore $C_K(Z) = 1$ in this case.  It follows that $KQ$ is a Frobenius group with Frobenius kernel $K$ and complement $Q$.  Also in this case, observe that $Q$ cannot act nontrivially on any factor of $K$.  If $K/K'$ is irreducible, then $N < K'$ because we have shown $K/N$ cannot be irreducible.  Because $N < K' < K$, $|N| = p^a$, and $Q$ is acting nontrivially on both $K/K'$ and $K'/N$, the orders of both $K/K'$ and $K'/N$ must be divisible by $p^a$ by Lemma \ref{prime divisors}. As $|K| \le p^{3a}$ and we have just shown $|K| \geq p^{3a}$, we must have equality here on all terms.  Thus $|K:K'| = |K':N| = p^a$ and $G$ is $p$-closed.  When $G$ is $p$-closed, we have $N = Z(K)$ and $(K,N)$ is a Camina pair.  Since $N$ is a term in the lower central series and $K'/N$ is irreducible, we obtain $[K',K] = N = Z(K)$, and $K$ has nilpotence class $3$, so $K'$ is abelian.  Since $K/N$ is not abelian and $K/K'$ and $K'/N$ are irreducible, we obtain $K' = Z_2 (K)$.  This proves the theorem in this case.
	
	For the second case, assume that $C > N$.  This is equivalent to $C_K (Q) > 1$.  Let $D/K' = C_{G/K'} (Q)$, and by Fitting's theorem, we have $K/K' = [K,Q]/K' \times D/K'$.   In particular, $K/D$ has no $Q$-invariant factors on which $Q$ acts trivially.  If $K/D$ is not irreducible, then $p^{2a}$ divides $|K:D|$ and $D = N$ which contradicts $C > N$.  Thus, $|K:D| = p^a$ and $|D:K'| \le |D:N| \le p^a$.  We know that $Q$ centralizes some factor of $D/N$.  Since $|D/N| \le p^a$, it follows that $Q$ must centralize every factor of $D/N$.  Since fixed points come from fixed points, we have $D = C_D (Q)N = C_K (Q)N = C$.  Since $K' \le D = C$, we see that $C$ is normal in $K$.  If $K'$ is not abelian, then $K'' > 1$.  Now, $K''$ is characteristic in $K$ which is normal in $G$, so $K''$ is normal in $G$.  Since $N$ is the unique minimal normal subgroup of $G$, we have $N \le K''$.  Thus, $K' = C_{K'} (Q) K''$, and so, $K' = C_{K'} (Q) \Phi (K')$.   By the Frattini subgroup property, this implies $K' = C_{K'} (Q)$ which is a contradiction since $N \le K'$.  Thus, $K'$ is abelian. Let $X/N = \Phi (C/N)$, and observe, that $C = C_K (Q) X$, so $C/N = C_{C/N} (Q) \Phi (C/N)$.  By the Frattini property, $C = C_C (Q)$ which is a contradiction since $N \le C$.  Thus, $C/N$ is abelian.

\end{proof}

We next use the unique subgroup of order $2$ in a Sylow $2$-subgroup when $p$ is an odd prime to obtain structural information.

\begin{lem}\label{2 cent}
Let $p$ an odd prime and let $a \ge 1$ be an integer.  Suppose that $G$ is an Isaacs group of degree $e = p^a$ and write $N$ for the unique minimal normal subgroup of $G$.  Suppose $S$ be a Sylow $2$-subgroup of $G$, take $Z$ be the unique subgroup of order $2$ in $S$, write $K = O_p (G)$, and let $D = C_K (Z)N$.   Then the following are true:
\begin{enumerate}
\item $KZ$ is normal in $G$.
\item $C_K (Z) \cap N = 1$.
\item $G = K N_G (Z)$ and $N_G (Z) \cap K = C_K (Z) > 1$.
\end{enumerate}
\end{lem}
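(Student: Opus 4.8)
The plan is to work throughout with the single involution $t$ generating $Z$, using repeatedly the Gagola hypothesis that $N\langle x\rangle$ is a Frobenius group for every $p'$-subgroup $\langle x\rangle\le G$, together with the facts that $C_G(N)=K$ and that $K$ is nonabelian (Lemma \ref{cent of N}(1) and Corollary \ref{Op nonabel}). Note first that $Z$ is well defined: since $p$ is odd, $S$ is a $p'$-subgroup, so $NS$ is a Frobenius group and $S$ is a Frobenius complement, forcing $S$ to be cyclic or generalized quaternion and hence to contain a unique involution $t$ with $Z=\langle t\rangle$. Conclusion (2) is then immediate, since $C_K(Z)\cap N$ consists of the elements of $N$ fixed by $t$, i.e. $C_N(Z)$, and because $NZ$ is a Frobenius group with complement $Z$ the involution $t$ acts fixed-point-freely on $N$, so $C_N(Z)=1$.

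For (1) I would first observe that, $N$ being abelian and $t$ acting fixed-point-freely on it with $t^2=1$, the automorphism $t$ inverts $N$: the map $v\mapsto v^{-1}v^t$ is a bijection of $N$, and every element of its image is inverted by $t$. The same reasoning applies to each conjugate $t^g$, because $\langle t^g\rangle$ is again an order-$2$ $p'$-subgroup, so $N\langle t^g\rangle$ is Frobenius and $t^g$ also inverts $N$. Hence the product $t\,t^g$ centralizes $N$, so $t\,t^g\in C_G(N)=K$, which gives $t^g\in Kt$ for every $g\in G$. Thus $tK$ is central in $G/K$, so $KZ/K=\langle tK\rangle\trianglelefteq G/K$ and therefore $KZ\trianglelefteq G$.

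For (3), an element $k\in K$ normalizes $Z=\langle t\rangle$ exactly when $t^k=t$ (the unique involution in $Z$), so $N_G(Z)\cap K=C_K(Z)$. To see $C_K(Z)>1$, I would argue by contradiction: if $C_K(Z)=1$ then $t$ acts fixed-point-freely on the whole $p$-group $K$, and a fixed-point-free automorphism of order $2$ forces $K$ to be abelian, contradicting Corollary \ref{Op nonabel}; hence $C_K(Z)>1$. For the factorization, having established $KZ\trianglelefteq G$ in (1), the equality $|KZ:K|=2$ makes $Z$ a Sylow $2$-subgroup of $KZ$, so the Frattini argument yields $G=KZ\,N_G(Z)=K\,N_G(Z)$ (using $Z\le N_G(Z)$).

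The step I expect to be the crux is the normality of $KZ$ in (1): the efficient route is the observation that \emph{every} $G$-conjugate of the involution inverts $N$, which pins all such conjugates into the single coset $Kt$. Everything else is either a direct centralizer computation or a standard application of the fixed-point-free-involution theorem and the Frattini argument, so the inverting argument is where the real content lies.
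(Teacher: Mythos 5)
Your proposal is correct and follows essentially the same route as the paper: both proofs hinge on the involution inverting $N$ fixed-point-freely, which forces its coset modulo $K = C_G(N)$ to be central in $G/K$ (the paper phrases this as inversion being central in $\Aut(N)$, you phrase it via all conjugates of $t$ inverting $N$), followed by the Frattini argument for $G = KN_G(Z)$, a short computation for $N_G(Z)\cap K = C_K(Z)$, and the fixed-point-free-involution theorem combined with Corollary \ref{Op nonabel} to get $C_K(Z) > 1$. The small differences in phrasing are immaterial; the substance matches the paper's proof.
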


\begin{proof}
We know that $Z$ acts Frobeniusly on $N$, so the nontrivial element in $Z$ inverts every element of $N$ and so, $C_K (Z) \cap N = 1$.  Thus, as an automorphism, $Z$ lies in the center of the automorphism group of $N$.  By Lemma \ref{cent of N}, we have $K = C_G (N)$, and so, $KN/N \le Z(G/N)$.  This implies that $KN$ is normal in $G$.  By the Frattini argument, we have $G = K N_G (Z)$.  Obviously, $C_K (Z) \le N_G (Z) \cap K$.  On the other hand, $[N_G (Z) \cap K,Z] \le Z$ and $[N_G (Z) \cap K,Z] \le [K,Z] \le K$, so $[N_G (Z) \cap K,Z] \le K \cap Z = 1$.  Hence, $N_G (Z) \cap K \le C_K (Z)$, and we have $N_G (Z) \cap K = C_K (Z)$.  In light of Lemma \ref{Op nonabel}, we know that $K$ is nonabelian.  This implies that $KZ$ is not a Frobenius group, and so, $C_K (Z) > 1$.  
\end{proof}




\section{When $e$ is a prime}
 
We now study more specifically the case where $e = p$ is a prime.  We will be able to describe fully what occurs in this case.  Our universal assumption is that $e$ is a prime and $G$ is an Isaacs group so that $|G| = e^4-e^3$.  It is useful also to recall that $d = e^2-e$.  

When $e = 2$, we see that $|G| = 2^4 - 2^3 = 8$.   It is not difficult to see that the two nonabelian groups of order $8$ (the dihedral group of order $8$ and the the quaternion group of order $8$) both satisfy $e= 2$ and $d = 2^2 - 2 = 2$.  Therefore, we may assume that $e \neq 2$ so that $e$ is an odd prime.  

Using the computer software Magma \cite{magma}, we find two groups of order $3^4 - 3^3 = 54$, each having an irreducible character of degree $d = 3^2 - 3 = 6$, and three groups of order $5^4 - 5^3 = 500$ each having an irreducible character of degree $d = 5^2 - 5 = 20$.  For odd primes, we obtain a complete description.  

We begin with strengthened version of Theorem \ref{pclosed} about the general structure of $G$ in the case where $e$ is an odd prime.


\begin{cor}\label{lem:pStructure}
If $G$ is an Isaacs group of degree $e = p$ for an odd prime $p$, then
$G$ can be realized as a semidirect product of an extraspecial $e$-group of order $e^3$ and a cyclic group of order $e-1$, where the cyclic group acts transitively on the nonidentity elements of the center of the $e$-group.  In particular, $G$ must be solvable.
\end{cor}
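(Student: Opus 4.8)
The plan is to first pin down that $G$ is $p$-closed, since once that is known the hard structural work has already been done in Theorem \ref{pclosed}. Write $e = p$, so $a = 1$ and $|G| = e^4 - e^3 = p^3(p-1)$. From the facts recalled just before Theorem \ref{pclosed}, $G$ is a Gagola group with unique minimal normal subgroup $N$ of order $e = p$, and $|G:P| = e - 1 = p - 1$ for a Sylow $p$-subgroup $P$; since $p - 1$ is prime to $p$, this forces $|P| = p^3$. Setting $K = O_p(G)$, Lemma \ref{index ptoa} gives $|K:N| > p^a = p$, so $|K:N| \ge p^2$ and hence $|K| = |K:N|\,|N| \ge p^3$; as $K$ is a $p$-group contained in $P$ we get $p^3 \le |K| \le |P| = p^3$, whence $K = P$. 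In particular $P$ is normal and $G$ is $p$-closed. This is the one place where the hypothesis $a = 1$ is really doing the work, and I expect it to be the crux of the argument.

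With $p$-closedness in hand, Theorem \ref{pclosed} applies directly: $G$ has a normal Sylow $p$-subgroup $P$ with $(P, Z(P))$ a Camina pair, $|Z(P)| = e = p$ and $|P:Z(P)| = e^2 = p^2$, together with a Hall $p$-complement $H$ of order $e - 1 = p - 1$ such that $Z(P)H$ is a two-transitive Frobenius group. Because $|P| = p^3$ while $|Z(P)| = p$, the group $P$ is nonabelian (this also follows from Corollary \ref{Op nonabel}), and a nonabelian group of order $p^3$ has center of order $p$ with $P' = \Phi(P) = Z(P)$, hence is extraspecial. Thus $P$ is an extraspecial $e$-group of order $e^3$.

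Next I would analyze the action of $H$. Since $Z(P)H$ is a Frobenius group with kernel $Z(P) \cong Z_p$ and complement $H$, the complement $H$ acts faithfully and fixed-point-freely on $Z(P)$, so $H$ embeds in $\Aut(Z(P)) \cong Z_{p-1}$. As $|H| = p - 1 = |\Aut(Z(P))|$, this embedding is onto, so $H$ is cyclic of order $p - 1$ and acts as the full automorphism group of $Z(P)$; equivalently, and as already encoded in the two-transitivity of $Z(P)H$, the group $H$ permutes the $p - 1$ nonidentity elements of $Z(P)$ transitively.

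Finally I would assemble the semidirect product. Since $P$ is a normal Sylow $p$-subgroup and $H$ is a Hall $p$-complement, $P \cap H = 1$ and $G = PH = P \rtimes H$, which is exactly the asserted description. Solvability is then immediate, as $G$ is an extension of the $p$-group $P$ by the cyclic group $H$; alternatively it follows from Corollary \ref{nonsolvable p-closed}, since a nonsolvable $p$-closed Isaacs group occurs only for $a = 2$. The routine points I am deferring are the standard classification of nonabelian groups of order $p^3$ as extraspecial and the elementary facts about Frobenius complements acting on a cyclic kernel.
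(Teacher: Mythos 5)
Your proof is correct, and its overall architecture matches the paper's: establish that $G$ is $p$-closed, invoke Theorem \ref{pclosed}, identify the normal Sylow $p$-subgroup as extraspecial, and embed the complement into $\Aut(N) \cong Z_{p-1}$. The one genuine difference is the route to $p$-closedness, which, as you say, is the crux. The paper obtains it from Theorem 4.17 of \cite{lewis2018camina}: for a Gagola group, $Z(P) \subseteq N$ with $Z(P) = N$ if and only if $P \lhd G$; since $|N| = p$ is prime and $Z(P) > 1$, equality is forced, which simultaneously yields normality of $P$ and (because $|P| = p^3$ while $|Z(P)| = p$) extraspecialness. You instead argue internally: Lemma \ref{index ptoa} gives $|O_p(G):N| > p$, hence $|O_p(G)| \ge p^3 = |P|$, forcing $O_p(G) = P$. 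Both arguments are sound; yours keeps the corollary self-contained within the paper's own machinery (Lemma \ref{index ptoa} together with the Gagola facts recalled before Theorem \ref{pclosed}), whereas the paper's external citation buys a shorter argument and delivers $Z(P) = N$ at once, a fact it then reuses both for the extraspecial claim and for the $N/C$-theorem step $P = C_G(N)$. You replace that last step with the equivalent observation that a Frobenius complement acts faithfully on its kernel, so $H$ embeds in $\Aut(Z(P)) \cong Z_{p-1}$ and is onto by order count. The remaining points --- extraspecialness of a nonabelian group of order $p^3$, cyclicity and transitivity of $H$, and solvability of $G = P \rtimes H$ --- coincide in both treatments.
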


\begin{proof}
By Theorem 7.2 of \cite{hung2016finite}, we know that since $d,e > 1$ and $|G| = e^4-e^3$ that $G$ has a Gagola character of degree $d$ and a unique minimal normal subgroup $N$ of order $e$.  If $P \in \Syl_e(G)$, Theorem 4.17 of \cite{lewis2018camina} tells us that $Z(P) \subseteq N$, with $Z(P) = N$ if and only if $P \lhd G$.  Since $|G| = e^3(e-1)$, the order of $P$ is $e^3$, and thus we must have $Z(P) = N$ as $Z(P)$ cannot be trivial.  It follows both that $P \lhd G$ and that $P$ is extraspecial.

 
$As$ $G$ must be $p$-closed in this case, Theorem \ref{pclosed} gives us that $(P,Z(P))$ is a Camina pair and that $P$ has a complement $H$ for which $Z(P)H$ is a 2-transitive Frobenius group.  It follows that $N = Z(P)$ has $G = N_G(N)$ and $P = C_G(N)$, therefore $H \cong G/P$ is isomorphic to a subgroup of $\Aut(N)$ by the $N/C$ Theorem.  We conclude that $G/P$ is cyclic and therefore that $G$ is a solvable group as both $P$ and $G/P$ are solvable.  

\end{proof}

We next show that we obtain a unique group in the case when the exponent of the Sylow $e$-subgroup is $e^2$.

\begin{thm}\label{thm:pPrime1}
Let $p$ be an odd prime.  Then there is a unique Isaacs group of degree $p$ whose normal Sylow $p$-subgroup has exponent $p^2$.
\end{thm}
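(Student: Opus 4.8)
The plan is to reduce everything to a conjugacy computation inside $\Aut(E)$, where $E$ is the (unique) extraspecial group of order $p^3$ and exponent $p^2$. By Corollary~\ref{lem:pStructure}, any Isaacs group $G$ of degree $p$ is isomorphic to a semidirect product $P \rtimes H$ with $P$ extraspecial of order $p^3$ and $H$ cyclic of order $p-1$ acting transitively on $Z(P)\setminus\{1\}$; since $|Z(P)\setminus\{1\}| = p-1 = |H|$, that action is regular, so the restriction map $H \to \Aut(Z(P)) \cong \mathbb{F}_p^\times$ is an isomorphism. Imposing that the Sylow subgroup have exponent $p^2$ forces $P \cong E$. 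Thus classifying these $G$ up to isomorphism amounts to classifying the subgroups $H \le \Aut(E)$ that are cyclic of order $p-1$ and restrict isomorphically onto $\Aut(Z(E))$, \emph{up to $\Aut(E)$-conjugacy}: conjugate such subgroups produce isomorphic semidirect products (the conjugating automorphism of $E$ together with conjugation on $H$ gives the isomorphism), and conversely every such $H$ yields an Isaacs group by Theorem~\ref{pclosed}. So I would prove there is exactly one such conjugacy class and that it is nonempty.

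The central computation is to determine the image of $\Aut(E)$ in $\mathrm{GL}(V)$, where $V = E/Z(E)$ is a two-dimensional $\mathbb{F}_p$-space. Because $p$ is odd and $E$ has class $2$, the $p$-power map $x \mapsto x^p$ is a homomorphism $E \to Z(E)$; as $z^p = 1$ for $z \in Z(E)$, it factors through a linear functional $q\colon V \to Z(E)$, and this $q$ is \emph{nonzero} exactly because $E$ has exponent $p^2$. Any $\phi \in \Aut(E)$ induces $\bar\phi \in \mathrm{GL}(V)$ and a scalar $\lambda$ on $Z(E)$; the commutator form forces $\det\bar\phi = \lambda$ (a symplectic similitude in dimension $2$), and compatibility with the power map forces $q\circ\bar\phi = \lambda q$, so $\bar\phi$ stabilizes the line $\ker q$. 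Choosing a basis of $V$ adapted to $\ker q$, I would show the image of $\Aut(E) \to \mathrm{GL}(V)$ is exactly $T = \left\{\begin{pmatrix}\lambda & 0\\ \beta & 1\end{pmatrix} : \lambda \in \mathbb{F}_p^\times,\ \beta \in \mathbb{F}_p\right\} \cong \mathrm{AGL}(1,p)$, a Frobenius group of order $p(p-1)$ in which $\det$ is precisely the quotient map onto $\mathbb{F}_p^\times$, and that the kernel of $\Aut(E)\to\mathrm{GL}(V)$ is $\Inn(E)$, of order $p^2$.

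With this in hand, existence and uniqueness follow quickly. For existence, the maps $a \mapsto a^\lambda$, $b \mapsto b$ (in the standard presentation of $E$) are automorphisms realizing the diagonal complement of $T$ and generate a valid $H$. For uniqueness, any valid $H$ is a $p'$-group, so $H \cap \Inn(E) = 1$ and $H$ maps isomorphically onto a subgroup $\bar H \le T$ of order $p-1$; since $T$ is Frobenius with kernel its normal $p$-subgroup, $\bar H$ is a Frobenius complement, and all such are conjugate in $T$. Fixing $\bar H$, the valid $H$ with that image are complements to the normal $p$-subgroup $\Inn(E)$ inside the preimage of $\bar H$, hence all conjugate by Schur--Zassenhaus. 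Therefore all valid $H$ lie in a single $\Aut(E)$-conjugacy class, giving a unique $G$.

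The step I expect to be the main obstacle is pinning down the image of $\Aut(E)$ in $\mathrm{GL}(V)$. This is exactly where the exponent-$p^2$ hypothesis is used: the nonzero power-functional $q$ cuts the image down from all of $\mathrm{GL}(V)$ to the Frobenius group $\mathrm{AGL}(1,p)$. (In the exponent-$p$ case $q$ vanishes, no line is distinguished, the image is all of $\mathrm{GL}(V)$, and one gets several classes --- consistent with the $(p-1)/2$ count elsewhere in the paper.) Once the reduction to $\mathrm{AGL}(1,p)$ is established, conjugacy of Frobenius complements together with Schur--Zassenhaus finishes the argument cleanly.
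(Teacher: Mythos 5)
Your proposal is correct, but it takes a genuinely different route from the paper's own proof. After the common reduction to $G \cong E \rtimes H$ with $H \le \Aut(E)$ cyclic of order $p-1$ acting faithfully on $Z(E)$, you classify the \emph{subgroups} $H$ up to $\Aut(E)$-conjugacy: you compute the extension $1 \to \Inn(E) \to \Aut(E) \to T \to 1$ with $T \cong \mathrm{AGL}(1,p)$ (the exponent-$p^2$ hypothesis entering precisely through the nonzero power functional $q$, which forces the image of $\Aut(E)$ into the stabilizer of the line $\ker q$), and then uniqueness falls out of two complement-conjugacy facts: conjugacy of Frobenius complements in $T$ (equivalently, Schur--Zassenhaus over its order-$p$ kernel), followed by Schur--Zassenhaus over the abelian $p$-group $\Inn(E)$. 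The paper instead normalizes a single automorphism $\psi$: it quotes Winter (Corollary 1 of \cite{winter}) to get a nontrivial vector of $P/Z(P)$ fixed by all of $\Aut(P)$, uses the same power-map computation you use to show $C_{P/Z(P)}(\psi)$ is proper (it equals $\Omega_1(P)/Z(P)$), splits $P/Z(P) = [P/Z(P),\psi] \times C_{P/Z(P)}(\psi)$ by Fitting's theorem, and extracts adapted generators $x,y,z$ with $\psi(x)=x^c$, $\psi(y)=y$, $\psi(z)=z^c$; uniqueness follows because every valid $\psi$ attains the same normal form. Both arguments hinge on the same key fact --- for $p$ odd the $p$-th power map is a homomorphism $E \to Z(E)$ that is nonzero exactly in the exponent-$p^2$ case --- but yours is self-contained (no appeal to Winter), makes conjugacy of subgroups rather than normal forms of elements the organizing principle (thereby dispatching the subgroup-versus-element conjugacy worry raised in the notes at the end of the paper), and explains structurally why exponent $p^2$ gives one class while exponent $p$ gives $(p-1)/2$ (there $q=0$, no line is distinguished, and the image is all of $\mathrm{GL}(2,p)$); the paper's version is more elementary and produces explicit generators and relations, in the same style as its exponent-$p$ count. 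One small repair to your existence step: the maps $a \mapsto a^{\lambda}$, $b \mapsto b$ for all units $\lambda$ modulo $p^2$ generate a cyclic group of order $p(p-1)$, not $p-1$, so you should either choose $\lambda$ of multiplicative order exactly $p-1$ modulo $p^2$, or pass to the unique subgroup of order $p-1$ of that cyclic group (or just invoke the existence half of Schur--Zassenhaus in the preimage of the diagonal); as written, ``generate a valid $H$'' is slightly off, though trivially fixable.
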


\begin{proof}
Using the structure of $G$ given in Corollary \ref{lem:pStructure}, let $P \in \Syl_e(G)$ and write $G$ as $G = P \rtimes \langle \psi \rangle$, where $\psi \in \Aut(P)$ acts transitively on the nonidentity elements of $Z(P)$.  

We begin by showing that there are maps in $\Aut(P)$ that satisfy the above, so the number of groups $G$ having the character of degree $d$ we desire do occur.  Let $F$ denote the field of order $p$, and suppose $c \in F$ is a generator of the multiplicative group.  Let us view $P$ as being generated by $x,y,$ and $z$, subject to the constraints that $Z(P) = \langle z \rangle$, $[x,y] = z$, $x^p = z$, and $o(y) = o(z) = p$.  The map $\psi$ defined by $\psi(x) = x^c$, $\psi(y) = y$, and $\psi(z) = z^c$ belongs to $\Aut(P)$ and acts as is required by Lemma \ref{lem:pStructure}.  If we were to work in $P \rtimes \langle \psi \rangle$, note that the action of $\psi$ on $\Irr(N)$ is isomorphic to its action on $N$, so there exists a nonprincipal $\lambda \in \Irr(N)$ for which $\lambda^{\psi} = \lambda^c$.  As $N$ is central in $P$ and $\psi$ acts nontrivially on $\lambda$, $P$ is the stabilizer of $\lambda$ in $G$.  If $\theta \in \Irr(P)$ and $\chi = \theta^G$, then $\chi \in \Irr(G)$, and taking degrees, we see $\chi(1) = |G:P|\theta(1) = (e-1)e = d$.  Thus groups having the structure required by Lemma \ref{lem:pStructure} do exist. Because $\psi^k(z) = z^{c^k}$ for all integers $k \geq 1$, different choices of $c$ as an exponent for $z$ belong to $\langle \psi \rangle$.  Thus different choices for the exponent on $x$ and $z$ do not lead to non-isomorphic groups $G$. 


Our goal now is to show that any other $\psi \in \Aut(G)$ is ultimately similar to one described in the previous paragraph, proving the uniqueness.  That is, given $\psi \in \Aut(G)$ satisfying the requirements of Lemma \ref{lem:pStructure}, we will show that there exist group elements $x,y,$ and $z$ for which  $\psi(x) = x^c$, $\psi(y) = y$, $\psi(z)  = z^c$, $[x,y] = z$, and $x^p = z$, and $o(z) = o(y) = p$.  

To begin, let $c \in F^{\times}$ be such that $\psi(z) = z^c$ for some $z \in Z(P)$, noting that $\psi(z^m) = \psi(z)^m = (z^{mc})  = (z^m)^c$.  That is, if $\psi$ maps $z$ to $z^c$ for some $z$, then $\psi$ maps $z$  to $z^c$ for all $z \in Z(P)$. 

Next, we show that $C_P(\psi)$ is a subgroup of $P$ of order $p$ fixed by $\psi$.  We use the bar convention to denote modding by $Z(P)$.  By Corollary 1 of \cite{winter}, there exists a nontrivial $\bars{y} \in \bars{P}$ left fixed by every element of $\Aut(P)$.  In particular, $C_{\bars{P}}(\psi)$ is non-trivial.  Since $\psi$ acts nontrivially on $Z(P)$, we claim that $C_{\bars{P}}(\psi)$ is also proper.  To see this, suppose $g \in P$ is such that $\psi(\bars{g}) = \bars{g}$, so that $\psi(g) = gv$ for some $v \in Z(P)$.  Then $\psi(g^p) = \psi(g)^p = (gv)^p = g^pv^p = g^p \, \, ,$
where the penultimate equality holds because $v \in Z(P)$.  Hence $g^p$ is fixed by $\psi$, and as $g^p$ is central, we must have $g^p = 1$ in such circumstances.  We conclude from this
 that $C_{\bars{P}}(\psi)$ is proper in the case that $P$ has exponent $p^2$.  We further conclude that the preimage of $C_{\bars{P}}(\psi)$, henceforth $A$, is a normal elementary abelian subgroup of $G$ having order $p^2$.  By Corollary 3.28 of \cite{isaacs2008finite}, $C_{\bars{P}}(\psi) = \bars{C_P(\psi)}$.  Because $A$ is elementary abelian, contains $Z(P)$, and $\psi$ acts nontrivially on $Z(P)$, we conclude that $C_P(\psi)$ is a subgroup of $A$ disjoint from $Z(P)$ and complementing it in $A$.  Note that this is situation (c) of Lemma \ref{Zsigmondy}.  

 Because $\bars{P}$ is abelian, we can apply Fitting's Theorem to conclude $\bars{P} = [\bars{P},\psi] \times \bars{A}$.  Letting $B$ be the preimage of $[\bars{P},\psi]$, we see $B$ is a subgroup of $P$ of order $p^2$ normalized by $\psi$ with $B \cap A = Z(P)$.  That $B$ must be cyclic of order $p^2$ comes from the fact that $A$ is the kernel of the $p$-power map, a homomorphism for an extra-special $p$-group, and therefore the $p^2(p-1)$ elements of $G$ not in $A$ must all have order $p^2$.

 To finish our proof, let $x$ be a generator of $B$.  Since $B$ is normalized by $\psi$, we must have $\psi(x) = x^m$ for some $m$, and therefore $\psi$ is the $m$-th power map on all of $B$.  Because $x^p$ is central and contained in $B$, and $\psi$ is already known to be the $c$-th power map on the center of $P$, we conclude that $\psi(x) = x^c$.  Letting $y \in A$ be non-trivial and taking $z = [x,y]$ yields the representation we desire.



\end{proof}

Corollary \ref{lem:pStructure} gives us that $G$ has a normal extraspecial Sylow $e$-subgroup of order $e^3$.  Up to isomorphism there are exactly two extraspecial $e$-groups order $e^3$, and Theorem \ref{thm:pPrime1} part (b) handles the case when the extraspecial $e$-group has exponent $e^2$.  Our second theorem in this section handles when the extraspecial $e$-group has exponent $e$.

\begin{thm}\label{thm:pPrime2}
Let $p$ be an odd prime.  Then there exist $\dfrac{p-1}{2}$ non-isomorphic Isaacs groups of degree $p$ whose Sylow $p$-subgroup has exponent $p$.
\end{thm}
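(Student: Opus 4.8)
The plan is to use Corollary \ref{lem:pStructure} to write $G = P \rtimes H$ with $P$ the extraspecial group of order $p^3$ and exponent $p$ (the Heisenberg group over $\mathbb{F}_p$) and $H = \langle \psi \rangle$ cyclic of order $p-1$ acting on $Z(P)$ as the full multiplicative group $\mathbb{F}_p^\times$; conversely, by the ``if'' direction of Theorem \ref{pclosed}, every such semidirect product is an Isaacs group of degree $p$ (here $Z(P) \rtimes H$ is the one-dimensional affine group $\mathbb{F}_p \rtimes \mathbb{F}_p^\times$, which is sharply $2$-transitive and Frobenius). So the theorem amounts to counting these groups up to isomorphism. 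Since $P = O_p(G)$ is a normal, hence characteristic, Sylow $p$-subgroup, any isomorphism $G_1 \to G_2$ restricts to an automorphism of $P$; as all Hall $p'$-complements are conjugate by Schur--Zassenhaus, I would reduce the problem to deciding when the two images $a_i(H) \le \Aut(P)$ describing the actions are conjugate in $\Aut(P)$.

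Next I would pass from $\Aut(P)$ to ${\rm GL}(2,p)$. Restriction to $P/Z(P) \cong \mathbb{F}_p^2$ gives a surjection $\pi \colon \Aut(P) \to {\rm GL}(2,p)$ whose kernel is the group of central (equivalently inner) automorphisms, isomorphic to $C_p \times C_p$, a $p$-group; moreover the action of an automorphism on $Z(P)$ is by the determinant of its image. Because $H$ is a $p'$-group it meets $\ker \pi$ trivially, and a Schur--Zassenhaus argument inside the preimage $\pi^{-1}(\pi(H))$ shows that two such subgroups are conjugate in $\Aut(P)$ exactly when their images are conjugate in ${\rm GL}(2,p)$. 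Thus I must count conjugacy classes of cyclic subgroups $\langle A \rangle \le {\rm GL}(2,p)$ of order $p-1$ whose determinant map is onto $\mathbb{F}_p^\times$ (the transitivity/Frobenius condition on $Z(P)$).

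Here the structure is transparent: since $A^{p-1} = I$ and $t^{p-1}-1$ splits with distinct roots over $\mathbb{F}_p$, the generator $A$ is diagonalizable with eigenvalues $\alpha, \beta \in \mathbb{F}_p^\times$, so $\langle A\rangle$ lies in a split maximal torus. All such tori are conjugate, and two subgroups of one are conjugate in ${\rm GL}(2,p)$ iff their eigenvalue multisets agree after (i) replacing $A$ by a generating power $A^k$ with $\gcd(k,p-1)=1$ and (ii) swapping $\alpha \leftrightarrow \beta$ (the Weyl element). Writing $\alpha = g^a$ and $\beta = g^b$ for a fixed primitive root $g$, the condition that $\det A = g^{a+b}$ be a primitive root is $\gcd(a+b,\,p-1)=1$; this forces $a \ne b$ and makes $\langle A\rangle$ have order $p-1$ automatically. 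I would then normalize: scaling by the unit $(a+b)^{-1}$ replaces each class by its unique representative with $a+b = 1$, so the classes are parametrized by $a \in \mathbb{Z}/(p-1)$ modulo the swap $a \mapsto 1-a$. This involution has no fixed point, since $2a \equiv 1 \pmod{p-1}$ is unsolvable for even $p-1$, giving exactly $(p-1)/2$ classes.

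Finally I would record existence by exhibiting each class concretely: with $P = \langle x,y,z \rangle$ as in the proof of Theorem \ref{thm:pPrime1} but now with $x^p = 1$, the assignment $\psi(x) = x^\alpha$, $\psi(y) = y^\beta$, $\psi(z) = z^{\alpha\beta}$ defines an automorphism of order $p-1$ realizing the chosen eigenvalues. The step I expect to be the main obstacle is the bookkeeping that turns ``isomorphic as abstract groups'' into ``conjugate cyclic subgroups of ${\rm GL}(2,p)$'': one must verify that $P$ is preserved, handle the conjugacy of complements, and justify the reduction through $\ker \pi$. Once this is in place, the eigenvalue count is routine.
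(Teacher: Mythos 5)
Your proposal is correct, and its skeleton matches the paper's: use Corollary \ref{lem:pStructure} together with the converse direction of Theorem \ref{pclosed} to identify the groups in question with semidirect products $P \rtimes \langle\psi\rangle$, reduce the isomorphism count to counting conjugacy classes of cyclic subgroups of order $p-1$ in $\Aut(P)$ that act transitively on $Z(P)\setminus\{1\}$, push the count down to ${\rm GL}(2,p)$, and parametrize by unordered eigenvalue pairs modulo a fixed-point-free swap to obtain $(p-1)/2$. However, two of your key steps are genuinely different from, and tighter than, the paper's. First, where the paper cites the classification of conjugacy classes of ${\rm GL}(2,p)$ into four types and eliminates the scalar, Jordan-block, and non-split semisimple types one at a time, you observe that any $A$ with $A^{p-1} = I$ is diagonalizable over $\mathbb{F}_p$ because $t^{p-1}-1$ splits there with distinct roots; this disposes of the last two types in one line, and the scalar type dies because $\gcd(2a,\,p-1) \ge 2$ when $p-1$ is even. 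Second, you supply the two bridging arguments that the paper leaves implicit: that isomorphism of the semidirect products is equivalent to $\Aut(P)$-conjugacy of the images of the complements (using that $P = O_p(G)$ is characteristic together with Schur--Zassenhaus conjugacy of Hall complements), and that $\Aut(P)$-conjugacy of these $p'$-subgroups is equivalent to ${\rm GL}(2,p)$-conjugacy of their images, because the kernel of $\pi \colon \Aut(P) \to {\rm GL}(2,p)$ is the $p$-group $\Inn(P) \cong C_p \times C_p$, so Schur--Zassenhaus applies inside $\pi^{-1}(\pi(H))$. This second point resolves precisely the element-versus-subgroup conjugacy issue and the possibility of several $\Aut(P)$-classes collapsing onto a single ${\rm GL}(2,p)$-class, which the paper's published argument asserts but never justifies. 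Your determinant formulation of the transitivity condition and your normalization $a+b \equiv 1 \pmod{p-1}$ (in place of the paper's fixed product $ab = c$) are equivalent bookkeeping choices and yield the same count.
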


\begin{proof}
By Corollary \ref{lem:pStructure}, $G$ is a solvable group with $P \in \Syl_e(G)$ being both normal in $G$ and an extraspecial $e$-group of order $e^3$. This theorem also gives us that $G$ has a cyclic $e$-complement, so it is possible to view $G$ as a semidirect product of $P$ and an automorphism of $P$.  In order for $G$ to have an irreducible character of degree $d$, all $p-1$ non-identity elements of $\Irr(Z(P))$ lying under an irreducible character of $G$ of degree $d$ must lie in a single orbit.  It follows that there must exist $\psi \in \Aut(P)$ placing all $p-1$ non-identity elements of $Z(P)$ in a single orbit.  We count the number of total groups $G$ having the properties described in this theorem by counting the number of conjugacy classes of $\langle \psi \rangle$ in $\Aut(P)$ where $\psi$ has order $p-1$ and acts on $Z(P)$ in the necessary way. 

We claim that whenever $\psi \in \Aut(P)$, there is a corresponding element of $\Aut(P/Z(P)) \cong \textnormal{GL}(2,p)$ coming from the action of $\psi$ on $x$ and $y$.  Let $g \in P$ and use the bar convention to denote the image of $g$ modulo $Z(P)$.  For $\psi \in \Aut(P)$, define $\bars{\psi} \in \Aut(\bars{P})$ as $\bars{\psi}(\bars{g}) = \bars{\psi(g)}$.  Note that $\bars{\psi}$ is surjective because $\psi$ is surjective, and $\bars{\psi}$ is a homomorphism because 
$$\bars{\psi}(\bars{g} \cdot \bars{h}) = \bars{\psi}(\bars{g h}) = \bars{\psi(g h)} = \bars{\psi(g)} \cdot \bars{\psi(h)} = \bars{\psi}(\bars{g}) \cdot \bars{\psi}(\bars{h}) \, \, .$$

To see that $\bars{\psi}$ is injective, assume that $\bars{\psi}(\bars{g}) = \bars{\psi}(\bars{h})$.  Then $\bars{\psi{(g)}} = \bars{\psi(h)}$, and this combined with the fact that $\psi$ is surjective implies that there exists $v \in Z(P)$ for which $\psi(g) = \psi(h)\psi(v)$.  Because $\psi$ is a homomorphism $\psi(g) = \psi(hv)$, and as $\psi$ is injective, $g = hv$.  It follows that $\bars{g} = \bars{h}$, and injectivity of $\bars{\psi}$ holds.  Thus $\bars{\psi} \in \Aut(\bars{P})$, which is isomorphic to GL$(2,p)$.  In what follows, we use this correspondence and our understanding of elements of GL$(2,p)$ to classify elements of $\Aut(P)$.

Write $P = \langle x, y, z \rangle$, where $z$ is a generator of $Z(P)$ and $x$ and $y$ are such that $[x,y] = z$.  We assume that the exponent of $P$ is $e$, so that $x^p = y^p = 1$.  

Let $F$ denote the field having $p$ elements.  Because of the way $\psi$ must act on $Z(P)$, there must exist $c \in F^{\times}$ a generator of $F^{\times}$ for which $\psi(z) = z^c$.  For $a \in F^{\times}$, there is a unique $b \in F^{\times}$ for which $ab = c$ in $F^{\times}$, and we claim that the map $\psi$ defined by $\psi(x) = x^a$ and $\psi(y) = y^b$ will satisfy $\psi(z) = z^c$. To see this, recall that in any group we have the commutator identities $[g,hl] = [g,h][g,l]^h$ and $[gh,l] = [g,l]^h[h,l]$.  In a group where the commutator is central, the conjugation action will be trivial.  Thus in an extraspecial $p$-group, we will have $[g,hl] = [g,h][g,l]$ and $[gh,l] = [g,l][h,l]$.  Applying these general identities with $[g,y^b]$ and $[x^a,g]$, we see that $[x^a,y^b] = [x,y]^{ab}= z^c$, as claimed.  From this identity, it follows that if $\psi$ sends $x$ to $x^a$ and $y$ to $y^b$, $\psi$ will also send $z$ to $z^c$; in fact, any $\psi$ that acts in this way will have the properties we desire.  

For $\psi$ chosen in this way, the corresponding element of GL$(2,p)$ is the matrix $\left( \begin{array}{lr} a & 0 \\ 0 & b \end{array}\right)$.  Note that this matrix is conjugate to $\left( \begin{array}{lr} b & 0 \\ 0 & a \end{array}\right)$  via the matrix $\left( \begin{array}{lr} 0 & 1 \\ 1 & 0 \end{array}\right)$, so the map sending $x$ to $x^b$ and $y$ to $y^a$ is ultimately conjugate to $\psi$ in $\Aut(P)$.  Since similar matrices have the same eigenvalues, different choices of $a$, other than swapping the roles of $a$ and $b$, will not result in conjugate automorphisms. 

In summary, if we choose $c \in F^{\times}$ with $c$ a generator of the multiplicative group and choose an arbitrary $a \in F^{\times}$, we create an element of $\Aut(P)$ so that when we consider $P \rtimes \langle \psi \rangle$ we have a group $G$ with an irreducible character of degree $d$.  Let us count such groups by considering how many conjugates $\langle \psi \rangle$ has in $\Aut(P)$.

Because $\psi^k(z) = z^{c^k}$ for all integers $k \geq 1$, different choices of $c$ as an exponent for $c$ belong to $\langle \psi \rangle$.  Hence the number of choices for $c$ is not important for the total number of non-isomorphic groups $G$.  There are $p-1$ total choices for the member of $F^{\times}$ that serves as an exponent for $x$, and we have established that the only other choice for $a$ that leads to a conjugate member of $\Aut(P)$ is the corresponding $b$. Consequently, there are $\dfrac{p-1}{2}$ total conjugacy classes of subgroups of $\Aut(P)$ that can be built in this way.

To complete the proof, we must now show that any $\psi \in \Aut(P)$ that acts transitively on the nontrivial elements of $Z(P)$ is similar to one we have just described.  To do this, let us assume $\psi \in \Aut(P)$ acts transitively on the nontrivial elements of $Z(P)$, and let us consider the matrix $\bars{\psi}$ of GL$(2,p)$ that corresponds to $\psi$.  By \cite{piatetski1983complex}, there are four main types of conjugacy class of matrix to consider, where these four types are generally classified by their eignvalues:

\begin{enumerate}
\item $\left( \begin{array}{lr} a & 0 \\ 0 & a \end{array}\right)$, where $a \in F^{\times}$ 
\item $\left( \begin{array}{lr} a & 0 \\ 0 & b \end{array}\right)$, where $a \neq b,$ $a,b \in F^{\times}$
\item $\left( \begin{array}{lr} a & 1 \\ 0 & a \end{array}\right)$, where $a \in F^{\times}$
\item $\left( \begin{array}{lr} a & b\alpha \\ b & a \end{array}\right)$, where $a$ and $b$ are not both zero, $\alpha \in F^{\times}$ and $\alpha$ is a non-square. 
\end{enumerate}

If $\bars{\psi}$ belongs to a conjugacy class of the first form, then $\psi$ is similar to the automorphism of $P$ that sends $x$ to $x^a$ and $y$ to $y^a$, thus sending $z$ to $z^{2a}$.  Such an automorphism cannot act transitively on the nontrivial elements of $Z(P)$, and therefore cannot produce an automorphism capable of producing a group with an irreducible character of degree $d$.

If $\bars{\psi}$ is of the form $\left( \begin{array}{lr} a & 1 \\ 0 & a \end{array}\right)$, where $a \in F^{\times}$, note that for integers $k > 1$ we have 
\begin{equation}\label{eqn:matrixPower}
\left(  \begin{array}{lr} a & 1 \\ 0 & a \end{array}\right)^k = \left( \begin{array}{lr} a^k &  ka^{k-1}\\ 0 & a^k \end{array}\right) \, \, .
\end{equation}

Formula (\ref{eqn:matrixPower}) shows us that the order of such a matrix must necessarily be divisible by $p$.  This implies that the order of any corresponding automorphism $\psi$ must be divisible by $p$, which will not produce an automorphism of $P$ that will result in a group $G$ having an irreducible character of degree $d$.

Next, suppose $\bars{\psi}$ is of the form $\left( \begin{array}{lr} a & b\alpha \\ b & a \end{array}\right)$, where $a$ and $b$ are not both zero and $\alpha \in F^{\times}$ is a non-square.  Note that there is an isomorphism between matrices of this form and the multiplicative group of $F[\sqrt{\alpha}]$ via $\left( \begin{array}{lr} a & b\alpha \\ b & a \end{array}\right) \mapsto a + b \sqrt{\alpha}$.  As the multiplicative group of $F[\sqrt{\alpha}]$ is a cyclic group, it has a unique subgroup of order $p-1$, and this unique subgroup corresponds to the multiplicative group of the ground field $F$.  It follows that any matrix with order $p-1$ necessarily has $b = 0$.  However, this results in matrices of the first form above, and we have already shown that such matrices do not correspond to automorphisms of $P$ that act on $Z(P)$ in a way that will ultimately produce an irreducible character of $G$ having order $d$.

By process of elimination, this shows any automorphism of $P$ that ultimately leads to an irreducible character of $G$ of degree $d$ has a corresponding matrix of the second form; that is, any such automorphism will be conjugate to one sending $x$ to $x^a$, $y$ to $y^b$ and $z$ to $z^c$, where $a,b \in F^{\times}$ and $ab = c$, where $c$ is a generator of the multiplicative group $F^{\times}$.
\end{proof}

\section{Results for $e = p^2$} \label{p squared}

In this section, we now specialize to the case where $e = p^2$ for a prime $p$.  We first determine the possible structures for $G$ when $G$ is $p$-closed.

\begin{thm}\label{p^2 pclosed}
Let $p$ be a prime.  If $G$ is an Isaacs group of degree $p^2$ with $G$ $p$-closed, then the normal Sylow $p$-subgroup $K$ of $G$ is a $p$-group of order $p^6$ with $Z(K)$ elementary abelian of order $p^2$ and either:
\begin{enumerate}
\item $K' = Z(K)$ so that $K$ is semi-extraspecial or
\item $Z(K) = [K',K] < Z_2 (K) < K'$ so that $Z_2 (K)$ has order $p^4$ and $K'$ has order $p^3$.    
\end{enumerate} 
\end{thm}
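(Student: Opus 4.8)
The plan is to reduce the whole statement to Lemma \ref{Zsigmondy} (applied with $a=2$) and then to read off the central-series data case by case. First I would pin down the coarse numerology. Since $|G| = e^4 - e^3 = p^6(p^2-1)$ and $|G:P| = e-1 = p^2-1$ for a Sylow $p$-subgroup $P$, the hypothesis that $G$ is $p$-closed forces $K = O_p(G) = P$ with $|K| = p^6$. Lemma \ref{cent of N}(3) then gives $N = Z(K)$, so $Z(K) = N$ is elementary abelian of order $p^2$ and $|K:Z(K)| = p^4$; Corollary \ref{Op nonabel} (equivalently, the Camina pair $(K,N)$ from Theorem \ref{pclosed}) shows $K$ is nonabelian, and Lemma \ref{cent of N}(2) gives $N \le K'$. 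This settles the order of $K$ and the assertion about $Z(K)$, and reduces the problem to locating $K'$ and the second center inside the chain $Z(K) = N \le K' \le K$ with $|K:N| = p^4$.

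Next I would bring in the generalized Zsigmondy prime. Choosing $q$ to be a generalized Zsigmondy prime divisor of $p^2-1$, taking $Q$ a Sylow $q$-subgroup and $Z \le Q$ of order $q$, the group $G$ satisfies the hypotheses of Lemma \ref{Zsigmondy}, whose part (3) supplies the structural dichotomy. The branch $K' = N = Z(K)$ is exactly conclusion (1): here $(K,Z(K))$ is a Camina pair of nilpotence class $2$, so $K$ is semi-extraspecial of the stated orders, and I would simply record this. In the complementary branch $K$ has nilpotence class $3$ with $[K',K] = N = Z(K)$. To extract conclusion (2) I would combine two ingredients: Lemma \ref{prime divisors}(1), which forces every $Q$-composition factor of $K/N$ either to be centralized by $Q$ or to be a faithful module of order $p^a = p^2$; and the observation that $K/N$ has class $2$ with $(K/N)' = K'/N$, so that $Z_2(K)/N = Z(K/N)$. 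Running these together against $|K:N| = p^4$ should identify $|K:K'|$, hence $|K'|$, and then locate $Z_2(K)$, yielding the chain $Z(K) = [K',K] < K' < Z_2(K)$ with $|Z_2(K)| = p^4$ and $K'$ of the asserted order, together with $\cd(K \mid N) = \{p^2\}$ coming from the full ramification built into the Camina pair.

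I expect the main obstacle to be precisely this last index computation. The bare $Q$-module bookkeeping is compatible with more than one class $3$ configuration of $K/N$ (for instance a splitting into two faithful factors of order $p^2$ versus a splitting with a $Q$-centralized factor of order $p$), and only one of these is consistent with $(K,N)$ being a Gagola/Camina $p$-group realizing degree $d = p^2(p^2-1)$. So the delicate part is to use the Camina condition on $(K,N)$ — equivalently the full ramification of every nonprincipal $\lambda \in \Irr(N)$ with respect to $K/N$ — to eliminate the configuration that does not occur and to rule out nilpotence class $4$ altogether, since Lemma \ref{abelianization} only bounds the class by $a+2 = 4$. Once the correct class $3$ configuration is fixed, confirming $[K',K] = Z(K)$ and reading off $Z_2(K)$ via $Z_2(K)/N = Z(K/N)$ is routine, and the two branches of Lemma \ref{Zsigmondy}(3) then match conclusions (1) and (2), completing the argument.
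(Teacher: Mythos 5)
Your opening reduction (getting $|K| = p^6$, $N = Z(K)$ elementary abelian of order $p^2$, $K$ nonabelian, $N \le K'$) is fine and agrees with the paper's setup. The genuine gap is in your second step: Lemma \ref{Zsigmondy}(3) does not supply the dichotomy you claim. That lemma splits according to whether $C = C_K(Z)N$ equals $N$ or properly contains it, and the two explicit structures you quote (class $2$ semi-extraspecial versus class $3$) are the two sub-branches of case (3)(a), which holds only when $C_K(Z) = 1$, i.e.\ when $KQ$ is Frobenius. When $C_K(Z) > 1$ you are in case (3)(b), which yields only $K' \le C$, $|K:C| = p^2$, and that $K'$ and $C/N$ are abelian --- nothing close to conclusions (1) or (2). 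Case (3)(b) is not a degenerate case you can ignore: the standard $p$-closed examples live there. For instance, take $K$ the Heisenberg group of order $p^6$ (triples over $\mathbb{F}_{p^2}$ with $(a,b,c)(a',b',c') = (a+a',b+b',c+c'+ab')$) and $H \cong \mathbb{F}_{p^2}^{\times}$ acting by $(a,b,c) \mapsto (ta,b,tc)$; this is a $p$-closed Isaacs group of degree $p^2$ by Theorem \ref{pclosed}, yet every nontrivial $t \in H$ centralizes the subgroup $\{(0,b,0)\}$, so $C_K(Z) \ne 1$. Thus the case your plan must actually handle is exactly the one where Lemma \ref{Zsigmondy} says the least. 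There is a second error: even inside case (3)(a), the class-$3$ branch asserts $K' = Z_2(K)$ with $|K:K'| = |K':N| = p^a$, which for $a=2$ forces $|K'| = p^4$; this is incompatible with conclusion (2), where $|K'| = p^3$ and $K' < Z_2(K)$. So ``the two branches of Lemma \ref{Zsigmondy}(3) then match conclusions (1) and (2)'' is false; that branch of the lemma is in fact impossible here, which is part of what the theorem proves.

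For contrast, the paper never invokes Lemma \ref{Zsigmondy}. From the Camina pair $(K,N)$ with $N = Z(K)$ given by Theorem \ref{pclosed}, it assumes $Z(K) < K'$ and splits on $|K:Z_2(K)| \in \{p^2, p^3\}$ (both bounds are automatic for a nonabelian quotient $K/Z(K)$ of order $p^4$). If $|K:Z_2(K)| = p^2$, the standard fact that $|X:Z(X)| = p^2$ forces $|X'| = p$, applied to $X = K/Z(K)$, gives $|K'| = p^3$ and conclusion (2). If $|K:Z_2(K)| = p^3$, then $K' = Z_2(K)$ would make $K/Z(K)$ extraspecial of order $p^4$, impossible by parity, so $|K:K'| = p^2$ and $K$ would have class $4$ with $|K':K_3| = |K_3:K_4| = p$; this configuration is destroyed by choosing a generalized Zsigmondy prime $q$ of $p^2-1$, applying Lemma \ref{prime divisors}, lifting a fixed point $b$ of $Q$ into $K_2 \setminus K_3$, and observing that $aK' \mapsto [a,b]K_4$ is a surjective homomorphism $K/K' \to K_3/K_4$, contradicting the fact that $Q$ acts without trivial factors on $K/K'$ but centralizes $K_3/K_4$. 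This elimination of the class-$4$ configuration and the pinning of $|K'|$ to $p^3$ is precisely the ``delicate part'' your proposal names but does not prove, so as written the proposal is not a proof.
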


\begin{proof}
By Theorem \ref{pclosed}, we know that $K$ is a normal subgroup of order $p^6$ and $(K,N)$ is a Camina pair with $N = Z(K)$.  This implies that $N$ is an elementary abelian $p$-group of order $p^2$ and $\cd {K \mid N} = \{ p^2 \}$.  If $K' = Z(K)$, then $K$ is semi-extra-special and we are done.  Otherwise, we must have $Z(K) < K'$.  We know that $p^3 \le |K:Z_2(K)| \ge p^2$.  If $|K:Z_2 (K)| = p^2$, then $|(K/Z(K))'| = p$, and so, $|K':Z(K)| = p$.  This implies that $Z(K) = [K',K] < Z_2 (K) < K'$ so that $Z_2 (K)$ has order $p^4$ and $K'$ has order $p^3$.  

Now, suppose $|K:Z_2 (K)| = p^3$.  Note that if $K' = Z_2 (K)$, then $K/Z(K)$ is extra-special since $|Z(K/Z(K))| = |(K/Z(K))'| = p$, and this is a contradiction since $|K:K'| = p^3$ is not a square.  Thus, we must have $|K:K'| = p^2$.  This implies $K' = Z_3 (K) > K_3 = [K',K] = Z_2 (K) > K_4 = [K_3,K] = Z(K)$.  Observe that $|K':K_3| = |K_3:K_4| = p$. Take $q$ to be an odd prime divisor of $p+1$ if such a prime exists.  Otherwise, $p+1$ is a power of $2$, and take $q = 2$.  Write $Q$ for a Sylow $q$-subgroup of $G$.   When $q = 2$, take $Z$ to be the unique subgroup of order $2$ in $Q$.  Note that we may apply Lemma \ref{prime divisors} with $q$.   Notice that $K_2$, $K_3$, and $K_4$ are characteristic in $K$ and so are fixed by $Q$.  Since $|K_2/K_3|$ and $|K_3/K_4|$ are less than $p^2$, we see that $Q$ must centralize $K_2/K_3$ and $K_3/K_4$.  Because fixed points come from fixed points we can find $b \in C_{K_2} (Q)$ so that $K_2 = \langle b, K_3 \rangle$.  Consider the map $K/K' \rightarrow K_3/K_4$ by $aK' \mapsto [a,b]K_4$ for $a \in K$.  Since $[K_2,b] \subseteq K_4$, it is not difficult to see that this a well-defined surjective homomorphism.  However, $Q$ will act transitively on $K/K'$ and cannot act transitively on $K_3/K_4$, so we have a contradiction.  Thus, this case cannot occur.
\end{proof}

Verardi has proved in \cite{ver} that there is a unique isoclinism class of ultraspecial groups of order $p^6$ for every prime $p$.  Hence, for each odd prime $p$ there is a unique ultraspecial group of order $p^6$, the group that is isomorphic to a Sylow $p$-subgroup of ${\rm GL}_3 (p^2)$.  Using the description of the automorphism group of this group in \cite{lewiswilson}, we will obtain a number of extensions that yield $e = p^2$.  From the examples, we have constructed for $p = 3, 5, 7,$ and $11$, it also appears that there are two isoclinic groups of exponent $p^2$ whose automorphism group has order divisible by $p^2 - 1$.  For $p = 3$ and $p = 5$, both of these groups also yielded extensions with $e = p^2$.  

Finally, we see that there is also a possible Camina pair that may have extensions with $e = p^2$.  For $p = 3$ and $p = 5$, we have been able to construct extensions.  When $p \ge 5$, in order for the group $G$ to have an automorphism of order $p^2-1$, it appears to have to have exponent $p$ and it also appears that there is up to isomorphism, a unique such group of exponent $p$ for each prime $p \ge 5$.  

We now turn to the non $p$-closed examples, and  we now describe how to find examples for $p^2$.

\begin{thm} \label{p^2 not pclosed}
Let $p$ be a prime.  Suppose $G$ is an Isaacs group of degree $p^2$ with unique minimal normal subgroup $N$ where $G$ not $p$-closed, then the following hold:
\begin{enumerate}
\item $G$ has a normal $p$-subgroup $K$ of order $p^5$ so that $Z(K) = N$ and either:
\begin{enumerate}
    \item $K' = Z (K) = N$ (but not semi-extraspecial) or 
    \item $K' = Z_2 (K)$ has order $p^3$ and $Z(K) = [K',K] = N$.
\end{enumerate}
\item $G/K \cong {\rm SL} (2,p)$ and $G/K$ acts on $N$ as its natural module.
\item If $p$ is odd, take $S$ to be a Sylow $2$-subgroup of $G$ and $Z$ the unique subgroup of order $2$ in $S$, then $G = K N_G (Z)$ where $G/K \cong {\rm SL} (2,p)$ and $N_G (Z) \cap K = C_K (Z)$ is cyclic of order $p$.  Taking $C = C_K (Z)N$, we have $K' \le C$, $C$ is normal in $G$, and $C$ is abelian.
\item If $\lambda \in \Irr (N)$, then $G$ acts transitively on $\Irr (K \mid \lambda)$.
\end{enumerate}
\end{thm}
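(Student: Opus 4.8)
The plan is to take the four parts in order: drive (1) by the dichotomy of Lemma~\ref{Zsigmondy}, (2) by the subgroup structure of ${\rm GL}_2(p)$, (3) by Lemma~\ref{2 cent}, and (4) by Clifford theory together with the uniqueness of the Gagola character. For (1), since $G$ is not $p$-closed we have $|K| < p^6$, while Lemma~\ref{index ptoa} gives $|K:N| > p^2$, so $|K| = p^5$. Fix a generalized Zsigmondy prime $q$ of $p^2-1$, a Sylow $q$-subgroup $Q$, and the order-$q$ subgroup $Z \le Q$. Case 3(a) of Lemma~\ref{Zsigmondy} would force $|K| = p^6$, so we are in case 3(b): with $C := C_K(Z)N$ we get $C > N$, $K' \le C$, $|K:C| = p^2$ (hence $|C| = p^3$), $|C_K(Z)| = p$, and $K'$ abelian. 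As $C$ is generated by the central subgroup $N$ and the cyclic group $C_K(Z)$, it is abelian, and $N \le K' \le C$ with $|C:N| = p$ leaves $K' = N$ or $K' = C$. To locate the center I would use that $K/C$ is a $Q$-irreducible factor of order $p^2$ (Lemma~\ref{prime divisors}): if $Z(K) \not\le C$ then $Z(K)C = K$ and $K' = [Z(K)C, Z(K)C] = [C,C] = 1$, contradicting that $K$ is nonabelian; if $Z(K) = C$ then $K/Z(K)$ is $2$-generated, forcing $K'$ cyclic and contradicting $N \le K'$ with $N$ noncyclic. Hence $Z(K) = N$. The subcase $K' = N$ is nilpotence class $2$ with $K' = Z(K) = N$, not semi-extraspecial since $|K:K'| = p^3$ is not an even power; the subcase $K' = C$ gives, from $N \le [K',K] < K'$ and $K/N$ nonabelian of order $p^3$, that $[K',K] = Z(K) = N$ and $K' = Z_2(K)$ has order $p^3$.

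For (2), by Lemma~\ref{cent of N} we have $\overline{G} := G/K = G/C_G(N) \hookrightarrow {\rm Aut}(N) \cong {\rm GL}_2(p)$, and this action is faithful and irreducible (a proper nontrivial submodule of $N$ would be a normal subgroup of $G$ strictly inside $N$, against minimality). Here $|\overline{G}| = p(p^2-1) = |{\rm SL}_2(p)|$ and $O_p(\overline{G}) = 1$. Since an irreducible subgroup of ${\rm GL}_2(p)$ whose order is divisible by $p$ must contain ${\rm SL}_2(p)$ (Dickson; it cannot lie in the reducible Borel, its Sylow $p$-subgroup being non-normal), the order forces $\overline{G} = {\rm SL}_2(p)$ acting as the natural module. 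For (3), with $p$ odd a Sylow $2$-subgroup $S$ of $G$ is a Sylow $2$-subgroup of ${\rm SL}_2(p)$, hence generalized quaternion with a unique involution generating $Z$. Lemma~\ref{2 cent} then gives $G = K N_G(Z)$ and $N_G(Z) \cap K = C_K(Z)$; applying Lemma~\ref{Zsigmondy} with $q = 2$ (a generalized Zsigmondy prime of $p^2-1$ for odd $p$) yields $|C_K(Z)| = p$, so $C_K(Z)$ is cyclic, and $C = C_K(Z)N$ is the abelian group of order $p^3$ from (1) containing $K'$. Finally $C \lhd K$ by Lemma~\ref{Zsigmondy}(1), while $N_G(Z)$ normalizes $Z$ (hence $C_K(Z)$) and $N$, so $C \lhd G$ because $G = K N_G(Z)$.

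Part (4) is the crux. Let $\lambda \in \Irr(N)$ be nonprincipal (the principal case being trivial). Since $\overline{G} \cong {\rm SL}_2(p)$ is transitive on $N \setminus \{0\}$, it is transitive on the nonprincipal characters of $N$, so the unique Gagola character $\chi$ (the only element of $\Irr(G \mid N)$) lies over $\lambda$ and $\Irr(G \mid \lambda) = \{\chi\}$. The inertia group $T = G_\lambda$ has $T/K$ equal to the stabilizer of a nonzero vector in ${\rm SL}_2(p)$, of order $p$, so $T$ is a Sylow $p$-subgroup $P$ of $G$. By the Clifford correspondence $\Irr(P \mid \lambda)$ is in bijection with $\Irr(G \mid \lambda) = \{\chi\}$; thus $\lambda$ is fully ramified in $P/N$ and $\Irr(P \mid \lambda) = \{\psi\}$ with $\psi(1) = p^2$. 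Now for any $\nu \in \Irr(K \mid \lambda)$, every irreducible constituent of $\nu^P$ lies over $\lambda$ and so equals $\psi$; hence $\nu$ lies under $\psi$, and $\Irr(K \mid \lambda)$ is precisely the set of constituents of $\psi_K$. By Clifford's theorem for $K \lhd P$ these form a single $P$-orbit, so $P$, and therefore $G$, acts transitively on $\Irr(K \mid \lambda)$.

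The main obstacle I expect is part (1): converting the raw output of Lemma~\ref{Zsigmondy} into the exact statements $Z(K) = N$ and the two nilpotence-class structures relies on the center computation above, where the noncyclicity of $N$ and the $Q$-irreducibility of $K/C$ do the real work. A secondary delicate point is (2), where one must rule out subgroups of ${\rm GL}_2(p)$ of the same order that are not conjugate to the natural ${\rm SL}_2(p)$; it is the combination of irreducibility with $O_p(\overline{G}) = 1$ that makes Dickson's classification decisive. Once these are in place, (3) and (4) follow by assembling the lemmas and Clifford theory as above.
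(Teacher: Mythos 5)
Your proposal is correct, and its overall skeleton is the paper's (Lemmas \ref{index ptoa}, \ref{cent of N}, \ref{Op nonabel}, \ref{prime divisors}, \ref{Zsigmondy}, \ref{2 cent}, then Clifford theory), but in parts (1) and (2) you argue genuinely differently. In (1), the paper pins down the center by counting: if $Z(K) > N$ then $|K:Z(K)| = p^2$, which forces $|K'| = p$ and contradicts $N \le K'$; you instead push $Z(K)$ against the $Q$-irreducible factor $K/C$, splitting into $Z(K)C = K$ (killed because then $K' = [C,C] = 1$) and $Z(K) = C$ (killed because a class-two group with $2$-generated central quotient has cyclic derived subgroup). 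Your route works but is longer, and note that the \emph{statement} of Lemma \ref{prime divisors} does not actually give that $K/C$ is $Q$-irreducible: it only says a $Z$-irreducible factor above $K'$ is either centralized by $Q$ or has order $p^2$, and you must eliminate the centralized branch by coprime action, $C_{K/C}(Q) = C_K(Q)C/C = 1$ since $C_K(Q) \le C_K(Z) \le C$. That fact is proved inside the proof of Lemma \ref{Zsigmondy} (where $D$ is identified with $C$) but is not citable from its statement, so you should add that line. In (2), the paper's identification of $G/K$ with ${\rm SL}(2,p)$ goes through Zsigmondy primes, elements of order $p+1$, and Dickson's subgroup list; your argument --- the action on $N$ is faithful and irreducible by minimality of $N$, an irreducible subgroup of ${\rm GL}(2,p)$ of order divisible by $p$ has a non-normal Sylow $p$-subgroup, and two distinct order-$p$ unipotent subgroups generate the standard ${\rm SL}(2,p)$, which then equals the image by the order count $|G/K| = p(p^2-1)$ --- is cleaner, and it delivers the natural-module statement for free since the ${\rm SL}(2,p)$ you produce is the determinant-one group itself. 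Parts (3) and (4) match the paper: (3) is the assembly of Lemmas \ref{2 cent} and \ref{Zsigmondy} at $q=2$ (your observation that $2$ is always a generalized Zsigmondy prime divisor of $p^2-1$ for odd $p$ is exactly what makes this legitimate), and (4) is the paper's one-line Clifford argument, written out through the inertia group $P = G_\lambda$.

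One small correction in (4): the principal case is not ``trivial'' but false as literally stated, since $1_K$ is a $G$-fixed element of $\Irr(K \mid 1_N)$ and $K > N$, so $G$ cannot act transitively on that set. Like the paper's own proof, you should simply read the statement as concerning nonprincipal $\lambda$, which is the case your Gagola-character argument handles.
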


\begin{proof}
Since $G$ is not $p$-closed, we see that $|K| < (p^2)^3 = p^6$.  By Lemma \ref{index ptoa}, we have $|K:N| > p^2$, so $|K| > p^4$.  This implies that $|K| = p^5$.  Using Corollary, \ref{Op nonabel}, we know that $K$ is nonabelian, so $K' \ge N$.  Also, in light of Lemma \ref{prime divisors}, we have $p^2$ divides $|K:K'|$.  Hence, either $|K:K'| = p^2$ or $K' = N$.  By Lemma \ref{cent of N}, we have $N \le Z(K)$.  We know $|K:Z (K)| \ge p^2$.  If $|K:Z(K)| = p^2$, then $|K'| = p$, and this contradicts $N \le K'$.  Thus, $Z(K) = N$.   

If $|K:K'| = p^2$, then since $K' \not\le Z(K) = N$, we have $[K',K] > 1$.  Since $[K',K]$ is characteristic in $K$ which is normal in $G$, we have $[K',K]$ is normal in $G$, and since $N$ is the unique minimal normal subgroup of $G$, we have $N \le [K',K] < K'$.  Since $|K':N| = p$, we deduce that $|[K',K] = N = Z(K)$.  Since $K/N$ is a nonabelian group of order $p^3$, we obtain $K' = Z_2 (K)$.  The other possibility is that $K' = N = Z(K)$.  Since $|K:K'| = |K:N| = p^3$ is not a square, we know $K$ is not semi-exxtraspecial.

We know that $|N| = p^2$ and $N$ is elementary abelian of order $p^2$.  Thus, ${\rm Aut} (N) = {\rm GL} (2,p)$.  By Lemma \ref{cent of N}, we have that $K = C_G (N)$.  We know that $G/K = G/C_G (N)$ is isomorphic to a subgroup of ${\rm Aut} (N) = {\rm GL} (2,p)$.  Let $X$ be the image of $G/K$ in ${\rm GL} (2,p)$ We know that $|G:K| = p (p^2-1)$.  We also know that $|{\rm GL} (2,p): {\rm SL} (2,p)| = p-1$.  Thus, a Sylow $p$-subgroup of $X$ is contained in ${\rm SL} (2,p)$.  If $p+1$ a has an odd prime divisor $q$, then ${\rm SL} (2,p)$ will be generated by a Sylow $q$-subgroup of $X$.    If $p+1$ is a power of $2$, then take $q = 2$ and $X$ will have an element of order $p+1$ and this element is conjugate to an element of ${\rm SL} (2,p)$.  We see that the image of $G/K$ will contain the center of $G/K$ and so, we can use Dickson's classification of subgroups of ${\rm PSL} (2,p)$ to see that ${\rm PSL} (2,p)$ is the only subgroup that contains either both a Sylow $p$-subgroup and a Sylow $q$-subgroup for a Zsigmondy prime $q$ or both a Sylow $p$-subgroup and an element of order $(p+1)/2$ when $p+1$ is a power of $2$.  It follows that $G/K$ is isomorphic to ${\rm SL} (2,p)$.  Since $G/K$ acts transitively on $N$, it follows $N$ is the natural module for $G/K$.

Suppose $p$ is odd.  We may apply Lemma \ref{2 cent} to see that $G = K N_G (Z)$, $N_G(Z) \cap K = C_K (Z)$, and $C_K (Z) > 1$.  Let $q$ be an prime divisor of $p+1$ if one exists and $q=2$ if $p+1$ is a power of $2$, and let $Q$ be a Sylow $q$-subgroup of $G$ contained in $N_G (Z)$.  Observe that $Q$ normalizes $C = C_G (Z)N$.  If $K' = N$, then $K' \le C$.  If $|K:K'| = p^2$, then $Q$ acts irreducibly on $K/K'$.  Hence, we have $K = CK'$ or $C = K'$.  Suppose $K = CK'$, and note that this implies that $K = C \Phi (K) = C_K(Z) N\Phi (K) = C_K (Z) \Phi (K)$, and using the Frattini property, we have $K = C_K (Z)$ which is a contradiction.  Thus, $C = K'$.  Thus, in both cases, we have $K$ normalizes $C$ and so, $G = K N_G (Z)$ normalizes $C$.  Since $Q$ acts irreducibly on $K/C$, we see that $|K:C| = p^2$ and $|C:N| = |C_K (Z)| = p$.

We know that that $\lambda^G$ has a unique irreducible constituent, so $G$ acts transitively on $\Irr (K \mid \lambda)$.
\end{proof}

Continuing in the situation of Theorem \ref{p^2 not pclosed}, when $p$ is odd, we have that $G/C$ is isomorphic to ${\rm SL}_2 (p)$.  Also, for $p \ge 5$, we know that ${\rm SL}_2 (p)$ is not solvable, so as we noted in the Introduction, if an example exists, then it will not be solvable.

\section{Examples}

We now include examples that show that when we drop the assumption that $e = p$ that we can find examples where $P \in \Syl_p(G)$ is not normal and examples where $P \in \Syl_p(G)$ is not semi-extraspecial.  

\subsection{Isaacs groups of degree $4$}

When $e = 4$, we able obtain a complete description of the Isaacs groups of degree $4$ using the computer algebra system Magma (or GAP) (see \cite{magma} or \cite{GAP}).  Note that when $e = 4$, we have that $|G| = 4^4 - 4^3 = 192$ and $d = 4^2 - 4 = 12$.  Up to isomorphism, we have six different groups.  Four of these groups are $2$-closed and two are not.  We obtain three different multisets of character degrees each of which occurs for two groups.  

For the groups that are $2$-closed, two of the groups have character degree multiset $[<1,12>,<3,4>,<12,1>]$ and two of the groups have character degree multiset $[<1,3>,<3,5>,<12,1>]$.  Notice that the second pair of groups will be Frobenius groups.   The Frobenius kernel is $P$ and a Frobenius complement is $Z_3$.  (This cannot occur when $e$ is odd, since $2$ divides $e-1$ and we know that if $2$ divides the order of the Frobenius complement, then the Frobenius kernel must be abelian; and this would contradict the fact that the normal Sylow subgroup must satisfy that $(P,Z(P))$ is a Camina pair.)  In fact, the normal Sylow $2$-subgroup in this case has character degree multiset $[<1,16>,<4,3>]$ which implies that $P$ is semi-extraspecial.  We note that there are up to isomorphism three different Sylow $2$-subgroups in these two cases and one of the $2$-groups occurs as Sylow $2$-subgroups for both pairs.  All three of the Sylow $2$-subgroups have exponent $4$.

We also have two groups that are not $2$-closed.  Both of these groups have character degree multiset $[<1,4>,<2,2>,<3,4>,<12,1>]$.  The Sylow $2$-subgroups for these groups both have character degree multiset $[<1,8>,<2,2>,<4,3>]$.  One of the Sylow $2$-subgroups has exponent $4$ and the other one has exponent $8$, so they are definitely not isomorphic.  The one with the Sylow $2$-subgroup having exponent $4$ is the example of order $64*3 = 192$ that is described in Section 6 of \cite{gagola}.

\subsection {Isaacs groups of degree $9$}

Note that when $G$ is an Isaacs group of degree $9$, we have $|G| = 9^4 - 9^3 = 5832$ which is too large to be in the small groups library.  Also, $d = 9^2 - 9 = 72$.  On the other hand, a Sylow $3$-subgroup of $G$ has order $9^3 = 3^6 = 729$, and groups of this order are in the Small Group library.  Thus, we can find the $3$-closed groups by searching through the groups of order $729$ whose center has order $9$, whose set of irreducible characters contain $8$ characters of degree $9$, and whose automorphism group contains either an element of order $8$ or a subgroup of the automorphism group that is isomorphic to the quaternions of order $8$ that acts Frobeniusly on the center.  If $P$ is such a $3$-group and $Q$ is the stated subgroup of order $8$, then taking $G$ to be the semi-direct product of $Q$ acting on $P$ will give us our group.

Using Magma \cite{magma}, we find the groups of order $729$ whose center has order $9$, that have an irreducible character of degree $9$, and whose automorphism group has order divisible by $8$.   We find six groups that satisfy these conditions.  In the Small Groups library in Magma (or GAP) they are: $225, 230, 235, 469, 472, 473$.  The first three ($225, 230, 235$) have nilpotence class $3$ and exponent $9$.  We note that the Automorphism group of ${\rm SmallGroup} (729,225)$ is the dihedral group of order $8$, so it does not produce an example.  The ${\rm SmallGroup} (729,469)$ is the Heisenberg group of order $729$ and the remaining two groups, which are ${\rm SmallGroup} (729,472)$ and ${\rm SmallGroup} (729,473)$, are isoclinic to the Heisenberg group having exponent $9$.  

Now, the automorphism group of $P = {\rm SmallGroup} (729,469)$ has four nonconjugate cyclic subgroups of order $8$, and so, we obtain four nonisomorphic Isaacs groups $G$ that are semi-direct products of $P$ acted on by a cyclic group of order $8$ that act Frobeniusly on the center.  The automorphism groups for the groups, 
$${\rm SmallGroup} (729,235), {\rm SmallGroup} (729,472), {~\rm and ~} {\rm SmallGroup} (729,473),$$  each only have one conjugacy class of cyclic subgroups of order $8$ that act Frobeniusly on the center, and so up to isomorphism, we only get one group $G$ from each of them.  Hence, we obtain up to isomorphism seven groups of order $5832$ that are $3$-closed where the Sylow $2$-subgroup is cyclic.
 
The Isaacs group $G$ that has as a normal Sylow $3$-subgroup the one that is not a semi-extraspecial group has character degree multiset of 
$$[ <1, 8>, <2, 4>, <6, 12>, <8, 3>, <72, 1> ].$$  
For the Isaacs groups that have a normal Sylow $3$-subgroup that is semi-extraspecial, we see that the  character degree multiset 
$$[ <1, 8>, <4, 4>, <8, 9>, <72, 1> ]$$ occurs for three groups (two of these have the Heisenberg group as their Sylow $3$-subgroup), the character degree multiset 
$$[ <1, 72>, <8, 9>, <72, 1> ]$$ occurs for two groups with different Sylow $3$-subgroups, and finally, the character degree multiset 
$$[ <1, 8>, <2, 16>, <8, 9>, <72, 1> ]$$ 
occurs for one group.  

We now turn to the examples with the quaternion group of order $8$.  The automorphism group of the Heisenberg group, ${\rm SmallGroup} (729,469)$ has two conjugacy classes that are isomorphic to the quaternion group that act Frobeniusly on the center.  The automorphism groups for 
$${\rm SmallGroup} (729,230), {\rm SmallGroup} (729,235), {\rm SmallGroup} (729,472), $$
$${~\rm and ~} {\rm SmallGroup} (729,473)$$ 
each contain one conjugacy class of subgroups isomorphic to the quaternions that act Frobeniusly on the center.  Therefore, we obtain six Isaacs groups $G$ in this way.

The Isaacs groups $G$ coming from both non semi-extraspecial groups with the quaternions have character degree multiset that equal 
$$[ <1, 4>, <2, 5>, <3, 8>, <6, 10>, <8, 3>, <72, 1> ].$$  For the semi-extraspecial groups with the quaternions, we get two groups having the character degree multiset 
$$[ <1, 12>, <2, 15>, <8, 9>, <72, 1> ]$$ and two groups having the character degree multiset 
$$[ <1, 4>, <2, 9>, <4, 2>, <8, 9>, <72, 1> ].$$  

To determine the non $3$-closed Isaacs groups of degree $9$, we have Magma find the groups of order $243$ whose centers have order $9$, whose derived subgroups contain the center and have order either $9$ or $27$, and whose automorphism group have order divisible by $8$.  We find five groups where the center has order $9$ and two groups where the center has order $3$.  From these, we find that there is only one non $3$-closed  Isaacs group of degree $9$.  This group has $K = O_3 (G)$ with $K' = Z(K)$ of order $9$.  Note that this will be the example of order $3^6*2^3 = 5832$ in Section 6 of \cite{gagola}.

\subsection {Isaacs groups of degree $25$}

The first part of the procedure for finding the Isaacs groups of degree $25$ is similar to that for finding those of degree $9$.  We have $|G| = 25^4 - 25^3 = 375,000$ and $d = 25^2 - 25 = 600$.   Again, the groups $G$ are too big to be in the Small Groups library, but a Sylow $5$-subgroup of $G$ has order $25^3 = 5^6 = 15,625$, and groups of this order are in the Small Group library.  Thus, we can find the $5$-closed groups by searching through the groups of order $5^6$ whose center has order $25$, whose set of irreducible character degrees contain $24$ characters of degree $25$, and whose automorphism group contains either an element of order $24$ or a subgroup of the automorphism group that is isomorphic to the cyclic group of order $8$ acting nontrivially on a group that acts Frobeniusly on the center.  None of these groups have the an automorphism group with the quaternions as a subgroup, so we do not obtain any complements containing the quaternions.

As in the case when $e = 9$, we obtain the Heisenberg group of order $5^6$, ${\rm Small Group} (5^6,276)$, and two other semi-extraspecial groups of exponent $25$ that are isoclinic to the Heisenberg group: 
$${\rm Small Group} (5^6,278) {~ \rm and ~}{ \rm Small Group} (5^6,279).$$  
We also obtain one group that has nilpotence class $3$ and exponent $5$: 
$${\rm Small Group} (5^6,498).$$

In the automorphism group of the Heisenberg group, there are twelve conjugacy classes of cyclic subgroups of order $24$ that act Frobeniusly on the center and six conjugacy classes of subgroups that are isomorphic to a cyclic group of order $8$ acting nontrivially on a group of order $3$ that act Frobeniusly on the center.  For each of the other three groups, their automorphism groups each contain only one conjucgacy class of each type of subgroup.  Thus, we obtain $15$ Isaacs groups that are $5$-closed where a Hall $5$-complement is cyclic, and $9$ Isaacs groups that are $5$-closed where a Hall $5$ complement is cyclic group of order $8$ acting nontrivially on a cyclic group of order $3$.

Since we have $10$ different multisets of character degrees that occur for the groups with a cyclic Hall $5$-complement and $6$ different multisets of character degrees that occur for the groups with a noncyclic Hall $5$-complement, we are not going to list them all here.

To find the non $5$-closed Isaacs groups of degree $25$, we use Magma to search the groups of order $3,125$ that a center of order $25$ and either have the derived subgroup equal to the center or have the derived subgroup of order $125$ and contain the center.   In both cases, we then also look for groups whose automorphism group has order divisible by $24$.  We find that there are five group with nilpotence class $2$ and two groups of nilpotence class $3$.  Constructing the extensions, we find one Isaacs group $G$ of degree $25$ that is not $5$-closed which has $O_5 (G)$ has nilpotence class $3$, and this group has appeared in \cite{nonsolv}, and we find one Isaacs group $G$ of degree $25$ that is not $5$-closed which has $O_5 (G)$ has nilpotence class $2$, and we will give generators and relations below for this group.

\begin{verbatim}
G = < a,b,c,d,e,f,g,h | a^5, b^5, c^5, d^5, e^5, f^4, g^3, h^2, 
[b, c], [a, d], [b, d], [c, d], [a, e], [b, e], [c, e], [d, e], 
[a, f], [a, g],
f^-2 * h = 1, (b^-1 * h)^2 = 1, (c^-1 * h)^2 = 1, 
(d^-1 * h)^2 = 1,
(e^-1 * h)^2 = 1, g^-1 * h * g * h = 1, 
a * b^-1 * a^-1 * b * d^-1 = 1,
a * c^-1 * a^-1 * c * e^-1 = 1, f^-1 * d^-2 * f * d = 1,
g^-1 * e^-1 * d^2 * g * d^-1 = 1, g * e * d^2 * g^-1 * d^-1 = 1,
e * f^-1 * e^-1 * d * f * e = 1, (g^-1 * b^-1)^3 = 1, 
(g^-1 * c^-1)^3 = 1,
g * c^-1 * a^-1 * b^-1 * a * g^-1 * b^-1 * c = 1,
f^-1 * b^-1 * a * b^-1 * a^-1 * f * b * e^-1 = 1,
a * c^-1 * f^-1 * b^-1 * a^-1 * c * f * c^-1 = 1,
g^-1 * f^-1 * g^-1 * f^-1 * g^-1 * f * g^-1 * f * g^-1 * f = 1 >
\end{verbatim}

\subsection{Degrees $49$ and $121$}

While we were not able to compute the Isaacs groups of degree $49$ and $121$, we were able to find the $p$-groups of order $7^6$ and $11^6$ whose centers have order $49 = 7^2$ or $121 = 11^2$ and whose irreducible characters contain either $48$ characters of degree $49$ or $120$ characters of degree $121$, and whose automorphism groups have orders divisible by $48$ or $120$.  Again, in both cases, we obtain the Heisenberg group of order $p^6$, and two other semi-extraspecial groups of exponent $p^2$ that are isoclinic to the Heisenberg group.  We also obtain one group that has nilpotence class $3$ and exponent $p$.  So as in the case with $p = 5$, for $p = 7$ and $p = 11$, we obtain four possible groups for the normal Sylow $p$-subgroup for a $p$-closed Isaacs group of order $p^6$.  We then ran into difficulties trying to pin down the subgroups of the autormorphism groups to build the complements.

For non closed version, we were able to find the $p$-groups of order $7^5$ that meet our hypotheses, and assuming our calculations in Magma our correct, no non $7$-closed examples exist of Isaacs group of degree $7^2$.  On the other hand, we were not able to complete these calculations for the non $11$-closed Isaacs groups of degree $11^2$.


\end{document}